\newcommand\supp{\mathop{\rm supp}\nolimits}
\newtheorem{thm}{Theorem}[section]
\newtheorem{lem}[thm]{Lemma}
\newtheorem{exam}[thm]{Example}
\newtheorem{defin}[thm]{Definition}
\newcommand\beq{\begin{equation}}
\newcommand\eeq{\end{equation}}
\newcommand\DD{{\mathbb D}}
\newcommand{\TT}{{\mathbb T}}
\newcommand{\G}{{\mathcal G}}
\newcommand\ZZ{\mathbb Z}
\newcommand{{\centering\input{.pstex_t}}}[1]{{\centering\input{#1.pstex_t}}}
\title{The group of invariants of an inner function with finite spectrum}
\author{Isabelle Chalendar\\ \small{Universit\'e de Lyon; CNRS; Universit\'e Lyon 1; INSA de Lyon; Ecole Centrale de Lyon}\\ \small{ CNRS, UMR 5208, Institut Camille Jordan}\\ \small{ 43 bld. du 11 novembre 1918, F-69622 Villeurbanne Cedex, France}\\ \small{\tt E-mail: chalenda@math.univ-lyon1.fr}
\and Pamela Gorkin\\ \small{Department of Mathematics, Bucknell University, Lewisburg, PA 17837, U.S.A.} \\ \small{\tt E-mail: pgorkin@bucknell.edu} 
\and and \\ Jonathan R. Partington\\\small{School of Mathematics, University of Leeds, Leeds LS2 9JT, U.K.} \\ \small{\tt E-mail: J.R.Partington@leeds.ac.uk}}
\begin{document}

\maketitle

\begin{abstract}
This paper determines  the group of continuous invariants corresponding to an inner function $\Theta$ with finitely many singularities
on the unit circle $\TT$;
that is, the continuous mappings $g: \TT \to \TT$ such that $\Theta \circ g = \Theta$ on   $\TT$. These mappings form
a group under composition.
\end{abstract}

\section{Introduction}

In this paper, we study the group $\G$ of continuous invariants corresponding to an inner function $\Theta$ with finitely many singularities
on the unit circle $\TT$;
that is, the continuous mappings $x: \TT \to \TT$ such that $\Theta \circ x = \Theta$ on   $\TT$.
Since $\Theta$ is undefined at its singular points on $\TT$, we interpret this as meaning that
$\Theta$ maps singular points to singular points and regular points to regular points.
These mappings form
a group under composition. 

In \cite{CC}, it was shown that the group $\G$ of invariants of a finite Blaschke product $B$ of degree $n$  is isomorphic to the cyclic group $\ZZ_n$.
In this case the circle may be divided into $n$ sub-arcs on each of which $B$ takes all values of $\TT$ precisely once; then the
mappings $x \in \G$ permute the arcs cyclically. The study of $\G$
was useful in a problem of dual algebra theory \cite[Thm. 3.1]{CCC}. 
 When we come to study general inner functions, the
situation is considerably more difficult, but it is still possible to give a 
 complete solution
in the case that the spectrum is finite; we shall see that various complicated groups
can arise, in particular infinite and non-Abelian groups.

Note that if $x \in \G$, then at a regular point of $\TT$ (that is, a point that is not a singularity of $\Theta$), we may use the equality  $\Theta \circ x = \Theta$ and equation (\ref{eqn:derivative}) below to conclude that 
$x$ is analytic on a neighbourhood of each regular point
with $\arg x(z)$ strictly increasing with $\arg z$.

\section{Notation}
We refer to \cite{hoffman} for standard results on Hardy spaces and inner functions.
Recall that an inner function $\Theta$ is a bounded analytic function in $\DD$ whose boundary values 
satisfy $|\Theta(z)|=1$ a.e., for $z$ on the unit circle  $\TT$; such a function may be factorized into the product of
a (finite or infinite) Blaschke product and a singular inner function.

An infinite Blaschke product defined on the disc $\DD$ is an analytic function $B$
of the form 
\[
B(z) = \lambda z^p  \prod_{j = 1}^\infty \frac{|a_j|}{a_j}\frac{a_j - z}{1 - \overline{a_j}z},
\] where $a_j \in \mathbb{D}$, $|\lambda| = 1$, 
$p$ is a non-negative integer and $\sum_{j=1}^\infty (1-|a_j|) < \infty$ (a finite Blaschke
product is defined analogously).

A singular inner function may be written as
\[
S(z)=\exp \left[ -\int \frac{e^{it}+z}{e^{it}-z} \, d\mu(t) 
\right],
\]
where $\mu$ is a singular positive measure on $\TT$.

The spectrum $\sigma(\Theta)$ is the complement of the set of points $p \in \TT$ such that $\Theta$
has an analytic extension into a neighbourhood of $p$. Indeed, writing $Z(\Theta)$ for the zero set of
 $\Theta$ in $\DD$ we have
\[
\sigma(\Theta)= \TT \cap \left\{ \overline{Z(\Theta)} \cup \supp(\mu) \right\}.
\]

\section{Background}

Consider an inner function $\Theta$ with finitely many points in the spectrum, $\sigma(\Theta)$. It is well known that the essential range of $\Theta$ at a singularity $\xi_0$ is the unit circle, $\partial \mathbb{D}$. For the purposes of this paper, it will be necessary to classify the possible types of limiting behaviour of $\Theta$ at the point $\xi_0$. We do so in the next definition.

\begin{defin}
Let $\Theta$ be an inner function with finite spectrum. Let $\xi_0 \in \sigma(\Theta) \cap \partial \mathbb{D}$. We say that

\begin{enumerate}
\item {\bf $\xi_0 = e^{i \theta_0}$ is of type $1_{a, L}$} if for $\varepsilon > 0$ sufficiently small, there are infinitely many solutions of $\Theta(\xi) = 1$ in the open interval $(e^{i \theta_0}, e^{i (\theta_0 + \varepsilon)})$, finitely many solutions in $(e^{i (\theta_0 - \varepsilon)}, e^{i \theta_0})$, and $\lim_{\alpha \to \theta_0^-} \Theta(e^{i \alpha}) = L$;

\item {\bf $\xi_0 = e^{i \theta_0}$ is of type $1_{b, L}$} if for $\varepsilon > 0$ sufficiently small, there are infinitely many solutions of $\Theta(\xi) = 1$ in the open interval $(e^{i (\theta_0 - \varepsilon)}, e^{i \theta_0})$, finitely many solutions in $(e^{i \theta_0}, e^{i (\theta_0 + \varepsilon)})$, and $\lim_{\alpha \to \theta_0^+} \Theta(e^{i \alpha}) = L$;

\item {\bf $\xi_0 = e^{i \theta_0}$ is of type $2$} if there are infinitely many solutions to $\Theta(\xi) = 1$ in both of the intervals $(e^{i (\theta_0 - \varepsilon)}, e^{i \theta_0})$ and $(e^{i \theta_0}, e^{i (\theta_0 + \varepsilon)})$.

\end{enumerate}

\end{defin}

We remark that there is nothing special about the value $1$ above; that is, if $B$ has only finitely many solutions to $B(e^{i \theta}) = 1$ in an interval, it has only finitely many solutions to $B(e^{i \theta}) = \lambda$ for any $\lambda \in \partial \mathbb{D}$.

We will soon show that these three intervals exhaust all possibilities and we will give examples of 
Blaschke products satisfying each possible situation. It will, however, be convenient to have a way to refer to 
the corresponding intervals of each type. Thus, we have the following.

\begin{defin}
Given an inner function $\Theta$ an interval $(e^{i \theta_0}, e^{i \theta_1})$

\begin{enumerate}
\item {\bf is of type $0$ (for $\Theta$)} if there are only finitely many solutions to $\Theta(\xi) = 1$ in the interval;
\item {\bf is of type $1_a$ (for $\Theta$)} if there exist infinitely many solutions to $\Theta(\xi) = 1$ in the interval accumulating precisely at the point $e^{i \theta_0}$;
\item {\bf is of type $1_b$ (for $\Theta$)} if there exist infinitely many solutions to $\Theta(\xi) = 1$ in the interval accumulating precisely at the point $e^{i \theta_1}$;
\item {\bf is of type $2$ (for $\Theta$)} if there are infinitely many solutions to $\Theta(\xi) = 1$ in the interval accumulating precisely  at 
both $e^{i \theta_0}$ and $e^{i \theta_1}$.
\end{enumerate}
\end{defin}

It should be clear that the definition describes all possibilities for the behavior of an inner function with isolated singularities, 
but it may not be clear that each situation actually occurs. 
Before we discuss this further, we note that a theorem of R. Berman \cite[Theorem 4.8]{Berman} 
shows that given a nonempty set $E \subset \TT$ with measure zero and of type $F_\sigma$ and $G_\delta$, there exists a Blaschke product $B$ such that $B^{-1}(1) = E$. 
Thus, there are Blaschke products with singularities of each type. 
For discrete singular inner functions, E. Decker has an interesting related result \cite[Lemma 6]{Decker}.

Writing our inner function $\Theta$ as $\Theta = B S$, where $B$ is a Blaschke product and $S$ is a singular inner function, we discuss the situation in which the zeros of the Blaschke product approach an isolated singularity through a Stolz region at the point $e^{i \theta_0}$; that is, the points are in a region of the form
$$\Gamma_{\alpha} = \{z \in \mathbb{D} : |e^{i \theta_0} - z| < \alpha(1 - |z|)\},$$ where $\alpha > 1$, as well as when the singular inner factor has an isolated singularity at $e^{i \theta_0}$. 


\begin{lem} \label{lemma:factorization}
Let $\Theta = BS$ be an inner function with finitely many singularities and let $e^{i \theta_0} \in \partial \mathbb{D}$ be a point for which there exists a sequence of zeros of $\Theta$ converging to $e^{i \theta_0}$ in a Stolz angle or $e^{i \theta_0} \in \textup{supp} (\mu)$, where $\mu$ is the singular measure corresponding to $S$. Then $e^{i \theta_0}$ is of type $2$.
\end{lem}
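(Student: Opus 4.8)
The plan is to show that near such a point $e^{i\theta_0}$ the inner function $\Theta$ takes the value $1$ infinitely often on *both* sides, so that $e^{i\theta_0}$ falls under type $2$ rather than type $1_{a,L}$, $1_{b,L}$ or type $0$. The key is the standard formula for the boundary derivative of an inner function: at a regular point $e^{i\theta}$ one has
\[
-i e^{i\theta}\frac{\Theta'(e^{i\theta})}{\Theta(e^{i\theta})} = \sum_j \frac{1-|a_j|^2}{|e^{i\theta}-a_j|^2} + \int \frac{d\mu(t)}{|e^{i\theta}-e^{it}|^2} =: \rho(\theta) > 0 ,
\]
which is exactly equation (\ref{eqn:derivative}) referred to in the introduction. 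Thus $\arg\Theta(e^{i\theta})$ is strictly increasing on each arc free of singularities, and the number of solutions of $\Theta(e^{i\theta})=1$ in an arc $(\theta_0,\theta_1)$ on which $\Theta$ is continuous is governed by the total increase of $\arg\Theta$, i.e.\ by $\int_{\theta_0}^{\theta_1}\rho(\theta)\,d\theta$: there are infinitely many such solutions accumulating at $e^{i\theta_0}$ precisely when $\int_{\theta_0}^{\theta_0+\varepsilon}\rho(\theta)\,d\theta = \infty$ for every $\varepsilon>0$, and similarly for the left-hand side.

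So the whole statement reduces to the claim that, under either hypothesis, $\int_{\theta_0-\varepsilon}^{\theta_0}\rho(\theta)\,d\theta=\infty$ and $\int_{\theta_0}^{\theta_0+\varepsilon}\rho(\theta)\,d\theta=\infty$ for all small $\varepsilon>0$. I would treat the two cases separately. If $e^{i\theta_0}\in\supp(\mu)$, then for every $\varepsilon>0$ the measure $\mu$ restricted to the arc $(\theta_0-\varepsilon,\theta_0)$ (resp.\ $(\theta_0,\theta_0+\varepsilon)$) is nonzero — wait, that is not automatic, since the support condition only gives mass in $(\theta_0-\varepsilon,\theta_0+\varepsilon)$. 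But $e^{i\theta_0}$ being an \emph{isolated} singularity means $\Theta$ is analytic on a punctured neighbourhood of $e^{i\theta_0}$, hence $\supp(\mu)\cap(\theta_0-\varepsilon,\theta_0+\varepsilon)=\{\theta_0\}$; since $\mu$ is singular it can have an atom at $\theta_0$ or be continuous there, but in either case $\mu(\{\theta_0\})$ together with the singularity forces a concentration of mass arbitrarily close to $\theta_0$ on at least one side, and in fact on both, as I explain next. The cleanest argument: pick $\delta>0$ with $\mu((\theta_0,\theta_0+\delta))>0$ or $\mu(\{\theta_0\})>0$ — if $\mu$ has \emph{no} mass in $(\theta_0,\theta_0+\varepsilon)$ for some $\varepsilon$ and no atom at $\theta_0$, then $e^{i\theta_0}\notin\supp(\mu)$ on that side, and one checks directly that $\Theta$ extends analytically from that side, contradicting that $e^{i\theta_0}\in\supp(\mu)$ is a genuine singularity. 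Granting mass arbitrarily near $\theta_0$ on the side in question, say mass $m_\eta>0$ in $(\theta_0,\theta_0+\eta)$ for all $\eta$, we get
\[
\int_{\theta_0}^{\theta_0+\varepsilon}\rho(\theta)\,d\theta \ \ge\ \int_{\theta_0}^{\theta_0+\varepsilon}\!\!\int_{(\theta_0,\theta_0+\varepsilon)} \frac{d\mu(t)\,d\theta}{|e^{i\theta}-e^{it}|^2} \ =\ \infty,
\]
because $\int_{\theta_0}^{\theta_0+\varepsilon}|e^{i\theta}-e^{it}|^{-2}\,d\theta=\infty$ for each fixed $t\in(\theta_0,\theta_0+\varepsilon)$ (the integrand blows up like $(\theta-t)^{-2}$, non-integrable); since $\mu$ has positive mass there, Tonelli gives divergence. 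The same on the other side. If instead the hypothesis is a sequence of zeros $a_j\to e^{i\theta_0}$ in a Stolz angle $\Gamma_\alpha$, write $a_j=r_je^{i\phi_j}$; the Stolz condition forces $|\phi_j-\theta_0|\le C(1-r_j)$, so the Poisson kernels $(1-|a_j|^2)/|e^{i\theta}-a_j|^2$ behave, for $\theta$ near $\theta_0$, essentially like $(1-r_j)/((\theta-\theta_0)^2+(1-r_j)^2)$, and $\sum_j\int_{\theta_0}^{\theta_0+\varepsilon}(1-r_j)/((\theta-\theta_0)^2+(1-r_j)^2)\,d\theta\asymp\sum_j\arctan(\varepsilon/(1-r_j))\asymp\sum_j\frac{\pi}{2}=\infty$. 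Because the $a_j$ are inside a \emph{fixed} Stolz angle at $e^{i\theta_0}$ — symmetric about the radius — each such Poisson bump contributes a comparable amount to both $\int_{\theta_0-\varepsilon}^{\theta_0}\rho$ and $\int_{\theta_0}^{\theta_0+\varepsilon}\rho$, so both diverge.

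The main obstacle I anticipate is the bookkeeping that converts "divergence of $\int\rho$ near $\theta_0$ from one side" into "infinitely many solutions of $\Theta=1$ accumulating at $\theta_0$ from that side", because one must be slightly careful that $\Theta$ is indeed continuous (hence the argument principle / monotonicity of $\arg\Theta$ applies) on the punctured arc — this is exactly where "finitely many singularities" and "$e^{i\theta_0}$ isolated" are used. A secondary subtlety is the Stolz-angle estimate: one needs the angle to be genuinely two-sided (which $\alpha>1$ guarantees, since $\Gamma_\alpha$ contains a symmetric cone at $e^{i\theta_0}$) so that the zeros cannot all hide on one side, as they could for tangential approach. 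Once these two points are handled, the conclusion "$e^{i\theta_0}$ is of type $2$" follows directly from the definition, using the remark that the behaviour for the value $1$ is the same as for any $\lambda\in\partial\mathbb{D}$.
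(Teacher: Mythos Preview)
Your overall strategy---show that $\int_{\theta_0}^{\theta_0+\varepsilon}\rho(\theta)\,d\theta=\infty$ (and likewise on the left) so that $\arg\Theta$ increases without bound on each one-sided punctured arc---is exactly the paper's approach for the Blaschke factor, and your Poisson-kernel estimate in the Stolz-angle case is equivalent to the paper's geometric argument (they express the same integral as the angle $\alpha_n$ subtended at $a_n$ by the arc and note it is bounded below by a fixed positive constant because $a_n\in\Gamma_\alpha$).

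There is, however, a genuine slip in your treatment of the singular factor. Once you observe (correctly) that $\supp(\mu)\cap(\theta_0-\varepsilon,\theta_0+\varepsilon)=\{\theta_0\}$ because the singularity is isolated, it follows that $\mu$ restricted to this arc is exactly $c\,\delta_{\theta_0}$ with $c=\mu(\{\theta_0\})>0$; there is \emph{no} mass in the open one-sided intervals $(\theta_0,\theta_0+\eta)$ for small $\eta$, so your sentence ``Granting mass $m_\eta>0$ in $(\theta_0,\theta_0+\eta)$'' and the Tonelli computation that follows, integrating $d\mu$ over the open interval $(\theta_0,\theta_0+\varepsilon)$, are both vacuous as written. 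The fix is immediate: with the atom at $\theta_0$ itself one has
\[
\int_{\theta_0}^{\theta_0+\varepsilon}\rho(\theta)\,d\theta \ \ge\ c\int_{\theta_0}^{\theta_0+\varepsilon}\frac{d\theta}{|e^{i\theta}-e^{i\theta_0}|^{2}}\ =\ \infty,
\]
and symmetrically on the left. (The paper handles this case slightly differently: it factors off the atomic part $S_1(z)=\exp\bigl(-c\,(e^{i\theta_0}+z)/(e^{i\theta_0}-z)\bigr)$, cites the classical fact that a nonconstant singular inner function assumes every unimodular value infinitely often, and then invokes symmetry. Your direct computation via $\rho$ is arguably cleaner and more uniform with the Blaschke case---once the location of the mass is corrected.)
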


\begin{proof} Let $(a_n)$ denote the zero sequence of the Blaschke product $B$. Our argument will apply to points lying on either side of the singularity we consider, and therefore we consider only one side of the point. The other will follow in exactly the same way. Under our hypotheses, we know that either $B$ has a singularity at the point $e^{i \theta_0}$, which we assume without loss of generality to be the point $1$, or $S$ has a singularity at the point $1$, or both. 

Thus, we may (without loss of generality) consider two cases: First, the case in which there are infinitely many $a_n$  in a Stolz angle, $\Gamma_\alpha$, at the point $1$, and second, the case in which the measure corresponding to $S$ has a singularity at the point $1$. Note that since the singularity of $S$ is isolated, we may assume that the measure corresponding to $S$ has an atom at the point $1$. 

So suppose that $B$ has a singularity at the point $1$ and infinitely many $a_n$ lie in the Stolz angle. We may consider only those $a_n$ that also lie in the right half-plane.

Note that on the unit circle at a point $e^{i \theta}$ at which $B$ is analytic, we have
\begin{eqnarray}\label{eqn:derivative}\frac{B^\prime(e^{i \theta})}{B(e^{i \theta})} = \sum_{n} \frac{1 - |a_n|^2}{(e^{i \theta} - a_n)(1 - \overline{a_n} e^{i \theta})}.\end{eqnarray} 

 Thus, if we let $\varepsilon$ be a small positive constant and $\gamma$ denote the arc of the circle from $1$ to $e^{i \varepsilon}$, we have
\begin{eqnarray}\label{derivative}\left|(1/2\pi) \int_\gamma \frac{B^\prime(z)}{B(z)} dz\right| =\left| \int_{0}^{\varepsilon} \frac{ B^\prime(e^{i \theta})}{e^{-i \theta} B(e^{i \theta})} d\theta/2\pi\right| = \int_{0}^{\varepsilon} \sum_n \frac{1 - |a_n|^2}{|e^{i \theta} - a_n|^2} d\theta/2\pi.\end{eqnarray} Now \cite[Exercise 3, p. 41]{Garnett}, $$\int_{0}^{\varepsilon} P_{a_n}(\theta) d\theta/2\pi = \alpha_n/\pi - \varepsilon/2\pi,$$ where $P_{a_n}(\theta) = \frac{1 - |a_n|^2}{|e^{i \theta} - a_n|^2}$ is the Poisson kernel and $$\alpha_n = \arg((e^{i \varepsilon} - a_n)/(1 - a_n)).$$ 
This is illustrated in Figure \ref{fig:garnett}.

\begin{figure}[htbp]
  \begin{center}
    \leavevmode
    {\centering\input{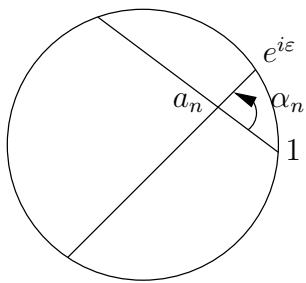}}
    \caption{\emph{The angle $\alpha_n$}}
    \label{fig:garnett}
  \end{center}
\end{figure}

If we choose $a_n$ sufficiently close to $1$, the fact that $a_n$ are all in a Stolz angle implies that there exists $\beta > 0$ such that $\alpha_n \ge (\beta + \varepsilon)/2$ for all such $n$.  Therefore, the change in the argument of $B$ is infinite, and there are infinitely many intervals approaching the point $1$ from above that $B$ wraps around the unit circle. 

Now consider the case in which the singular inner function has an isolated singularity at the point $1$. 
As noted above, if $S$ has finitely many singularities including a singularity at the point $1$, 
then the measure $\mu$ corresponding to $S$ has an atom at the point $1$; that is, 
$\mu = \sum_{k = 1}^m \alpha_k \delta_k$ where $\delta_k$ is point mass at the singularity $e^{i \theta_k}$, 
and $\delta_1$ is point mass at the point $1$. 

In this case, we may write $S = S_1 S_2$, where $S_2$ is a singular inner function continuous at the point $1$ and 
therefore has limit $L$ at the point $1$, where $|L| = 1$. 
Thus, the behavior of $S$ is determined by $S_1$ where $S_1 = (S_{\delta_1})^{\alpha_1} = e^{-\alpha_1 \frac{1 + z}{1 - z}}$, 
where $\alpha_1 > 0$.  Now it is well known (see \cite[Thm. VII.7.48]{Zygmund}) that every (nonconstant) singular inner function 
$S_{\mu}$ has the property that for each $\lambda \in \partial \mathbb{D}$ there exist infinitely many points $e^{i \theta_\lambda}$ 
for which the radial limit of $S_\mu$ satisfies $S_\mu^\star(e^{i \theta_\lambda}) = \lambda$. Since $S$ will assume every value on the unit circle between these points, we see that they must cluster at the singularity. Thus, $S$ assumes the value $\lambda$ on a sequence converging to the point $1$. 
Symmetry considerations show that, in fact, the point $1$ must be of type $2$.

The final case is that in which both functions are discontinuous at the point $1$. In this case, the argument of 
$\Theta$ is the sum of the arguments of the two functions, and the result follows from the paragraphs above.

\end{proof}

\begin{exam}
Given $\lambda \in \partial \mathbb{D}$, there exists a Blaschke product with a singularity of type $1_a$ on the unit circle.  \end{exam}

Suppose $1$ is a singularity of the Blaschke product $B$. Choose a sequence of points  $(a_n)$ in $\mathbb{D}$ approaching $1$ tangentially from above in such a way that the angular derivative at the point $1$ remains bounded. Any sequence $(a_n)$ with $\lim_n a_n = 1$ and $$\sum \frac{1 - |a_n|^2}{|1 - a_n|^2} < \infty$$ will work. For example, we may take $a_n$ so that $n^2(1 - |a_n|^2) < |1 - a_n|^2$, and $a_n$ lies in the upper-half disk. Then,  for $-\varepsilon < t \le 0$, we see that  $|e^{i t} - a_n| > |1 - a_n|$ so
 $$\sum \frac{1 - |a_n|^2}{|e^{i t} - a_n|^2} < \sum \frac{1 - |a_n|^2}{|1 - a_n|^2} < \infty.$$ Therefore the angular derivative is finite at $1$ and bounded on the interval $(e^{i t}, 1)$.  From the version of equation (\ref{derivative}) for the arc below the point $1$, we conclude that the change in argument is bounded on this interval and therefore $B(z) = 1$ can have at most finitely many solutions on this interval. Since $1$ is assumed to be a singularity, there must be infinitely many solutions converging to $1$ from above. Therefore $1$ is a point of type $1_a$.
 
Such examples have appeared in other contexts.  One can be obtained, for example, from the aforementioned work of Berman, or from the expression following (2.1) in Lemma 1 of Leung and Linden \cite{LeungLinden}, since we can construct $b$ so that the radial limit exists at $1$. Then the estimate provided there shows that $\prod_{n = 1}^N b(z, a_n)$ must be close to this limit for $z = r(\theta) e^{i \theta}$ where $r(\theta)$ is sufficiently large and $0 < \theta < \varphi_0$. 
 
 \begin{lem} If $\Theta$ is an inner function with finitely many singularities, then there exists a Blaschke product with precisely the same singularities as $\theta$ of precisely the same type. \end{lem}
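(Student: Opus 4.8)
The plan is to realise the prescribed singular behaviour one point at a time, multiply, and then repair the one-sided boundary limits at the end. Write $\sigma(\Theta)=\{\xi_1,\dots,\xi_N\}$ and, after relabelling, suppose $\xi_1,\dots,\xi_r$ are of type $2$ while $\xi_{r+1},\dots,\xi_N$ are of type $1_{a,L_j}$ or $1_{b,L_j}$, where $L_j\in\TT$ is the relevant one-sided limit of $\Theta$ at $\xi_j$. For each $j$ I would first produce a Blaschke product $B_j$ whose only singularity is $\xi_j$, of the correct \emph{coarse} type ($1_a$, $1_b$, or $2$) there. For a type-$2$ point $\xi_j$ this is immediate from Lemma~\ref{lemma:factorization}: let $B_j$ have a sequence of zeros converging radially to $\xi_j$ and no other zeros. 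For a type-$1_a$ or type-$1_b$ point $\xi_j$ one repeats the construction of the example preceding this lemma, placing a tangential sequence of zeros accumulating at $\xi_j$ from the prescribed side with finite angular derivative, so that by (the arc version of) equation~(\ref{derivative}) the change of argument --- hence the number of solutions of $B_j=1$ --- is finite on the other side, while the singularity forces infinitely many solutions on the prescribed side. In every case $\xi_j$ is an isolated singularity of $B_j$, since it is the only accumulation point of $Z(B_j)$.

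Now put $B_0=\prod_{j=1}^N B_j$, which is a Blaschke product with $\sigma(B_0)=\bigcup_j\sigma(B_j)=\{\xi_1,\dots,\xi_N\}$. Near a fixed $\xi_j$ we factor $B_0=B_j\cdot F_j$ with $F_j=\prod_{k\neq j}B_k$ analytic and unimodular on a one-sided boundary neighbourhood of $\xi_j$; there $\arg F_j$ is monotone with finite total variation, and $F_j$ has a genuine boundary value $F_j(\xi_j)\in\TT$. Since $\arg B_0=\arg B_j+\arg F_j$ with both summands monotone, the solution set of $B_0=1$ on a one-sided interval at $\xi_j$ is finite exactly when the total increase of $\arg B_j$ there is finite, i.e.\ exactly on the ``finite side'' of $\xi_j$ for $B_j$; hence $\xi_j$ is, for $B_0$, of the same coarse type as for $B_j$, namely that of $\Theta$. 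Moreover the relevant one-sided limit of $B_0$ at $\xi_j$ equals $\ell_j:=F_j(\xi_j)\cdot(\text{one-sided limit of }B_j\text{ at }\xi_j)\in\TT$, which in general differs from $L_j$.

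It remains to correct these limits at $\xi_{r+1},\dots,\xi_N$. Using that a finite Blaschke product can be prescribed to take arbitrary unimodular values at finitely many distinct points of $\TT$, choose a finite Blaschke product $\psi$ with $\psi(\xi_j)=L_j\overline{\ell_j}$ for $j=r+1,\dots,N$ (and $\psi\equiv1$ if $r=N$), and set $B=B_0\psi$. Then $B$ is a Blaschke product with $\sigma(B)=\sigma(B_0)=\{\xi_1,\dots,\xi_N\}$, because $\psi$ extends analytically across $\TT$; multiplying by $\psi$ leaves the coarse type at each $\xi_j$ unchanged, by the same argument as in the previous paragraph applied to the analytic unimodular factor $\psi$; and the relevant one-sided limit of $B$ at $\xi_j$ is now $\ell_j\,\psi(\xi_j)=L_j$ for $j>r$. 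Hence $B$ has precisely the same singularities as $\Theta$, each of precisely the same type.

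The step I expect to be the main obstacle is the boundary-interpolation fact for finite Blaschke products used above; I would either quote it or prove it directly, noting that it suffices, for each $j$, to exhibit a finite Blaschke product equal to the prescribed value at $\xi_j$ and to $1$ at the remaining points and then take the product, and that this single adjustment can be arranged by a degree/continuity argument for Blaschke products of sufficiently high degree. A secondary point requiring care is the assertion that multiplying an inner function by a function analytic and unimodular on a one-sided boundary neighbourhood of a point preserves the coarse type there; this is exactly the monotone-argument mechanism already used in the proof of Lemma~\ref{lemma:factorization} and in the preceding example, and presents no real difficulty once stated carefully.
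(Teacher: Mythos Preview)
Your argument is correct, but it takes a substantially different route from the paper's. The paper gives a one-line proof via Frostman's theorem: for a suitable $a\in\DD$ the Frostman shift $B:=\varphi_a\circ\Theta$ is a Blaschke product, and since $\varphi_a$ is a homeomorphism of $\TT$, the singular set of $B$ coincides with that of $\Theta$ and the finiteness or infiniteness of the solution set of $B=\lambda$ on each one-sided arc at each singularity matches that of $\Theta$. This is slick and avoids any construction or interpolation.

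Your approach, by contrast, is constructive: you build local Blaschke products $B_j$ with prescribed coarse type at each $\xi_j$, multiply them, check that the product inherits the correct coarse type at each singularity via the additive-argument mechanism, and then repair the one-sided limits with a finite Blaschke product interpolant. This is longer but has one genuine advantage: the Frostman shift replaces a one-sided limit $L$ by $\varphi_a(L)$, so strictly speaking the paper's $B$ has singularities of type $1_{a,\varphi_a(L)}$ rather than $1_{a,L}$. For the purposes of the paper this is immaterial, since $\Theta$ and $\varphi_a\circ\Theta$ have the identical group of invariants and $\varphi_a$ preserves equalities among the various $L$'s; but your construction actually achieves the literal types $1_{a,L}$, $1_{b,L}$ of the statement. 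The boundary interpolation step you flag is indeed the only nontrivial ingredient your route requires beyond what the paper has already established; it is true (the values of finite Blaschke products at finitely many fixed points of $\TT$ form all of $\TT^N$ under multiplication), and your sketch of reducing to one point at a time and invoking a degree/continuity argument is a reasonable way to prove it.
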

 
\begin{proof} This is a consequence of Frostman's theorem \cite[p. 79]{Garnett}, for we may choose $\varphi_a(z) =  \frac{z - a}{1 - \overline{a} z}$  where $\varphi_a$ preserves the order of the points such that $\varphi_a \circ \Theta = B$ is Blaschke. The fact that $\varphi_a$ is a continuous one-to-one mapping of the unit circle onto itself implies that the singularities remain at the same points and of the same type. \end{proof}
 
 Therefore, it remains to analyze the group of invariants for a Blaschke product. The general inner function will then follow in the same way.

\section{Preliminary Observations}

In this section, we continue to consider a Blaschke product $B$ with finitely many singularities on the unit circle. 
We will say that $x \in \G$ {\bf shifts the elements} of an interval 
$I$ if $x(z_{n, \lambda}) = z_{n + 1, \lambda}$ for all $n$ in $\mathbb{Z}$, $\mathbb{Z}^+$
or $\ZZ^-$ (depending on whether the intervals are of type $1$ or type $2$) and all $\lambda \in \partial \mathbb{D}$, 
where $z_{n, \lambda}$ are the solutions to $B(z) = \lambda$ in $I$ ordered with increasing argument.

\begin{lem}\label{lemma:bijective} Let $B$ be a Blaschke product with finitely many singularities and let $x \in \G$. Then $x$ is a bijective mapping of the circle onto itself.
\end{lem}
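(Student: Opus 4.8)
The plan is to exploit the local analyticity of $x$ at regular points, established in the Introduction via equation (\ref{eqn:derivative}): at any $z \in \TT$ that is not a singularity of $B$, the identity $B \circ x = B$ together with the fact that $B'/B$ is nonzero (indeed has positive real part after the appropriate normalization) forces $x$ to be analytic near $z$ with $\arg x(z)$ strictly increasing in $\arg z$. First I would record that $x$ maps the finite singular set $\sigma(B) \cap \TT$ into itself (by the convention adopted in the Introduction that invariants carry singular points to singular points and regular points to regular points), and hence maps the complement, a finite union of open arcs, into itself. On each such open arc of regular points, $x$ is a continuous, strictly monotone (argument-increasing) map, hence a homeomorphism onto its image, which is an open sub-arc of $\TT \setminus \sigma(B)$.

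The core of the argument is to show surjectivity onto all of $\TT$ (injectivity then follows from the global strict monotonicity of $\arg x$, modulo $2\pi$, together with the degree/winding bookkeeping below). For surjectivity I would argue as follows. Fix $\lambda \in \partial\DD$ that is a regular value, so that $B(z) = \lambda$ has no solutions among the singular points; the solution set $\{z_{n,\lambda}\}$ is a countable subset of $\TT \setminus \sigma(B)$, and by $B \circ x = B$ the map $x$ permutes each fibre $B^{-1}(\lambda)$. Because $x$ is strictly argument-increasing on each regular arc and continuous up to the endpoints (the singularities, which are fixed setwise), $x$ restricted to a single maximal arc between consecutive solutions $z_{n,\lambda}$ and $z_{n+1,\lambda}$ on which no further solution lies must send that arc onto another such ``gap'' arc of the fibre, monotonically. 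Running around the whole circle and using that $x$ is an honest continuous self-map of $\TT$ of some well-defined topological degree $d \ge 1$ (it cannot be degree $0$: an argument-increasing continuous self-map of $\TT$ that is not surjective would have to ``turn back'', contradicting monotonicity), one concludes $x$ is onto, and in fact a degree-$d$ covering; strict monotonicity then forces $d = 1$, so $x$ is a homeomorphism of $\TT$.

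A cleaner way to package the surjectivity, which I would prefer to write out, is: lift $x$ to a continuous map $\widetilde x : \RR \to \RR$ with $x(e^{i\theta}) = e^{i\widetilde x(\theta)}$ and $\widetilde x(\theta + 2\pi) = \widetilde x(\theta) + 2\pi d$ for some integer $d$. On every open sub-interval of $\RR$ corresponding to a regular arc, $\widetilde x$ is strictly increasing, and since the singular set is finite and $x$ permutes it, $\widetilde x$ is (weakly, hence by the local analyticity strictly) increasing on all of $\RR$; a strictly increasing continuous function with $\widetilde x(\theta + 2\pi) = \widetilde x(\theta) + 2\pi d$ is a bijection of $\RR$ onto $\RR$, and it descends to a bijection of $\TT$ onto $\TT$ precisely when $d = 1$. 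So the one remaining point is to exclude $d \ge 2$. If $d \ge 2$, then $x$ would be a $d$-to-one covering of $\TT$; but then, picking a regular value $\lambda$ and comparing fibres, $x$ would have to map the countable fibre $B^{-1}(\lambda)$ onto itself $d$-to-one while also being injective on each regular arc — and one checks via the accumulation structure of the $z_{n,\lambda}$ near the singularities (types $1_a$, $1_b$, $2$ as classified above, using Lemma \ref{lemma:factorization}) that a $d$-to-one fibre self-map is incompatible with $x$ fixing the finite singular set setwise and being argument-increasing, forcing $d=1$.

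I expect the main obstacle to be precisely this last point: ruling out degree $\ge 2$ cleanly without circularity, i.e.\ making rigorous that a strictly argument-increasing continuous self-map of $\TT$ that fixes a prescribed finite set setwise and commutes with $B$ must have degree one. The honest way to do it is a boundary/accumulation argument at a singularity: near a singularity of type $1_a$ (say), the solutions of $B(z)=\lambda$ accumulate only from one side, and $x$ — being continuous, argument-increasing, and a permutation of this fibre that fixes the singularity — must act as a ``shift'' on the $z_{n,\lambda}$ (in the sense defined just before the lemma), which pins down the local degree to be $1$ at that endpoint and hence globally. The rest of the proof is routine point-set topology of monotone circle maps.
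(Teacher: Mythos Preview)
Your lift framework is sound and correctly isolates the issue: everything reduces to showing the degree $d$ equals $1$. But your proposed fix does not close that gap. The boundary argument you sketch at a singularity of type $1_a$ is simply unavailable when every interval is of type $2$, and Lemma~\ref{lemma:factorization} shows that this is the typical situation (it occurs whenever zeros approach a singularity nontangentially, or whenever a singular inner factor is present). You gesture at the general case (``one checks via the accumulation structure\ldots types $1_a$, $1_b$, $2$'') but give no argument for type~$2$; and the appeal to $x$ ``acting as a shift'' on a fibre is premature, since that is precisely what Lemmas~\ref{lemma:type2intervals}--\ref{lemma:type1intervals} establish, and those lemmas invoke the present one.

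The paper's proof is much shorter because it works with the maximal regular arcs $I_1,\ldots,I_n$ between consecutive singularities rather than with fibres $B^{-1}(\lambda)$. Since $x$ is continuous, sends regular points to regular points and singular points to singular points, and is strictly argument-increasing on each $I_j$, the image $x(I_j)$ is a connected subset of $\TT\setminus\sigma(B)$ whose closure meets $\sigma(B)$ at both ends; hence $x(I_j)$ is an \emph{entire} arc $I_k$, and $x|_{I_j}\colon I_j\to I_k$ is a bijection. Continuity at the singular endpoints then forces consecutive arcs to go to consecutive arcs, so $x$ induces a cyclic permutation of the $n$ arcs and is therefore a bijection of $\TT$. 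In your lift language: as $\theta$ traverses one period (i.e.\ $n$ consecutive lifted arcs), $\widetilde x(\theta)$ traverses exactly $n$ consecutive lifted arcs, so $2\pi d=2\pi$ and $d=1$. Equivalently, $x^{-1}(\sigma(B))=\sigma(B)$ has $n$ elements, whereas a strictly monotone degree-$d$ self-map of $\TT$ would give $|x^{-1}(\sigma(B))|=dn$; this forces $d=1$ with no case analysis on types.
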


\begin{proof} We have already observed that $x$
maps  singular points to singular points and regular points to regular points. On each interval between two singular points the argument of $B$ is
strictly increasing (note that $B$ cannot be constant on a set of positive measure), and thus this interval is mapped bijectively under $x$
onto the whole of another such interval. By the continuity of $x$, successive intervals are mapped to successive intervals and since there are only
finitely many, this establishes that $x$ is a bijection of $\TT$.
\end{proof}

\begin{lem}\label{lemma:type2intervals} Let $I$ denote a type $2$ interval and $x \in \G$ such that $x(I)=I$. If there
exists $z \in I$ such that $x(z)=z$, then $x(w)=w$ for all $w \in I$. Hence if 
$x_1, x_2 \in \G$ with 
$x_j(I) = I$ for each $j = 1, 2$, and if there exists $z \in I$ with $x_1(z) = x_2(z)$, then $x_1(w) = x_2(w)$ for all $w \in I$. 
\end{lem}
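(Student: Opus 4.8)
The key object is the structure of a type $2$ interval $I = (e^{i\theta_0}, e^{i\theta_1})$: by definition the solutions of $B(\xi) = \lambda$ in $I$, for a fixed $\lambda \in \partial\mathbb{D}$, form a bi-infinite sequence $(z_{n,\lambda})_{n\in\mathbb{Z}}$ accumulating at $e^{i\theta_0}$ as $n\to-\infty$ and at $e^{i\theta_1}$ as $n\to+\infty$, ordered by increasing argument. The plan is to show first that any $x\in\G$ with $x(I)=I$ must \emph{shift the elements} of $I$ by a fixed integer $k$, i.e.\ $x(z_{n,\lambda}) = z_{n+k,\lambda}$ for all $n$ and all $\lambda$, with the same $k$ independent of $\lambda$. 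Then a fixed point in $I$ forces $k=0$, which forces $x$ to be the identity on a dense subset of $I$ (the union over all $\lambda$ of the solution sets), hence on all of $I$ by continuity. The second statement then follows immediately by applying the first to $x := x_2^{-1}\circ x_1$, which lies in $\G$ by Lemma~\ref{lemma:bijective} (it is a bijection of $\TT$ and clearly satisfies $\Theta\circ x = \Theta$), satisfies $x(I) = I$, and fixes the point $z$.

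\textbf{Key steps.} (1) Fix $\lambda$. Since $x\in\G$ we have $B(x(z_{n,\lambda})) = B(z_{n,\lambda}) = \lambda$, so $x$ permutes the set $\{z_{n,\lambda} : n\in\mathbb{Z}\}$; since (by the remark following equation~(\ref{eqn:derivative})) $x$ is an increasing homeomorphism of $I$ onto itself fixing the endpoints, this permutation is order-preserving, hence a shift $n\mapsto n + k(\lambda)$ for some $k(\lambda)\in\mathbb{Z}$ (an order-preserving bijection of $\mathbb{Z}$ is a translation). (2) Show $k(\lambda)$ is independent of $\lambda$: between consecutive solutions $z_{n,\lambda}$ and $z_{n+1,\lambda}$, $B$ runs once around $\TT$, so for $\mu$ near $\lambda$ the solution $z_{n,\mu}$ lies in the arc $(z_{n,\lambda}, z_{n+1,\lambda})$; since $x$ is monotone and sends $z_{n,\lambda}\mapsto z_{n+k(\lambda),\lambda}$, it sends this arc onto $(z_{n+k(\lambda),\lambda}, z_{n+k(\lambda)+1,\lambda})$, forcing $z_{n,\mu}\mapsto z_{n+k(\lambda),\mu}$, i.e.\ $k(\mu) = k(\lambda)$ for $\mu$ near $\lambda$; connectedness of $\TT$ gives a global constant $k$. (3) If $x(z)=z$ for some $z\in I$, pick $\lambda = B(z)$ so that $z = z_{n_0,\lambda}$; then $z_{n_0+k,\lambda} = x(z_{n_0,\lambda}) = z_{n_0,\lambda}$ forces $k=0$, whence $x$ fixes every $z_{n,\lambda}$ for every $\lambda$. (4) The set $\bigcup_{\lambda}\{z_{n,\lambda} : n\in\mathbb{Z}\}$ is dense in $I$ (between any two nearby points $B$ takes some value $\lambda$, producing a solution there), so by continuity $x = \mathrm{id}$ on $I$.

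\textbf{Main obstacle.} The routine parts are steps (1), (3), (4). The step requiring genuine care is (2), the $\lambda$-independence of the shift: one must argue cleanly that the cyclic/monotone structure of the level sets of $B$ on $I$ links the indexing across different values of $\lambda$, so that a single monotone map $x$ cannot shift by different amounts for different $\lambda$. The cleanest formulation is via the ``winding'' picture: as $z$ traverses $I$ from $e^{i\theta_0}$ to $e^{i\theta_1}$, $\arg B(z)$ increases by $+\infty$ and passes through each residue class mod $2\pi$ in a predictable interleaved order, so the combined sequence $\{z_{n,\lambda}\}_{n,\lambda}$ inherits a natural linear order in which $x$ acts, and the shift amount is a single integer read off from any one $\lambda$. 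Once this interleaving is made precise, monotonicity of $x$ does the rest. I would also remark that the hypothesis that $I$ is of type $2$ (rather than type $1$ or $0$) is what guarantees the index set is all of $\mathbb{Z}$ with no ``boundary'' solution fixed for trivial reasons, so that a fixed point genuinely pins down $k=0$.
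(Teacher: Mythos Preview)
Your argument is correct, but it takes a longer route than the paper's. The paper works with a \emph{single} value of $\lambda$, namely $\lambda = B(z)$ where $z$ is the given fixed point, and never needs the $\lambda$-independence of the shift that you flag as the ``main obstacle.'' Concretely: writing $(z_{n,1})_{n\in\ZZ}$ for the solutions of $B=\lambda$ in $I$ with $z_{0,1}=z$, the paper observes (exactly as in your step~(1)) that $x$ acts on this bi-infinite sequence as an order-preserving bijection, hence a shift; the fixed point $z_{0,1}$ forces the shift to be zero, so every $z_{n,1}$ is fixed. The crucial shortcut is then that on each open subarc $(z_{n,1},z_{n+1,1})$ the map $B$ is a \emph{bijection} onto $\TT\setminus\{\lambda\}$ (argument strictly increasing, winding once), so the relation $B\circ x = B$ together with $x\bigl((z_{n,1},z_{n+1,1})\bigr)=(z_{n,1},z_{n+1,1})$ forces $x$ to be the identity on each subarc directly, with no density argument and no second value of $\lambda$ needed.

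What your approach buys is the general ``shift by a fixed integer $k$'' description of any $x\in\G$ with $x(I)=I$, valid whether or not $x$ has a fixed point; this is exactly the structure used later in the paper when the generators $x_j$ are constructed, so proving it here is not wasted effort. But for the lemma as stated, the paper's argument is shorter precisely because it sidesteps your step~(2) entirely by using injectivity of $B$ on subarcs in place of your density-plus-continuity conclusion.
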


\begin{proof} 
Suppose, without loss of generality, that $B(z) =1$ and consider the two-sided sequence of points $z_{n, 1} \in I$ such that $B(z_{n, 1}) = 1$,
where $z_{0,1}=z$. These divide $I$ into open subarcs, $(z_{n, 1}, z_{n + 1, 1})$.
Now, being continuous and with strictly increasing argument on each such subarc, $B$ takes all possible values in $\TT$ except 1 exactly once. Moreover,
$x$ fixes $z_{0,1}$, and we now see that $x$, being bijective (see Lemma~\ref{lemma:bijective}), maps each $z_{n,1}$ to itself and fixes the intermediate intervals.
Clearly, the relation $B \circ x=B$ now implies that $x$ fixes all points of $I$.
Finally, if $x_1(z)=x_2(z)$, then $x_1^{-1}x_2$ fixes $I$ and $z$, so fixes each point of $I$.
 \end{proof}

\begin{lem}\label{lemma:type1intervals} 
Let $I$ denote a type $1$ interval or type $0$ interval. If $x \in \G$ and $x(I) = I$, then $x(z) = z$ for all $z \in I$. \end{lem}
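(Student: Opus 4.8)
\textbf{Proof plan for Lemma~\ref{lemma:type1intervals}.}

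The plan is to mimic the strategy of Lemma~\ref{lemma:type2intervals} but to exploit the one-sided (or finite) nature of the solution set as the device that pins down a fixed point, rather than assuming one. Consider first a type $1$ interval $I = (e^{i\theta_0}, e^{i\theta_1})$, say of type $1_a$, so the solutions of $B(\xi) = \lambda$ in $I$ accumulate only at the endpoint $e^{i\theta_0}$ and, for each $\lambda \in \partial\DD$, there are only finitely many of them near $e^{i\theta_1}$. Since $x \in \G$ with $x(I) = I$, and since (by Lemma~\ref{lemma:bijective}) $x$ is a bijection of $\TT$ fixing each singular point and with strictly increasing argument on each complementary interval, $x$ restricted to $I$ is an increasing homeomorphism of $I$ onto itself. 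Because $B \circ x = B$, the map $x$ must permute, for each fixed $\lambda$, the solution set $\{z_{n,\lambda}\} \subset I$ of $B(z) = \lambda$, and it does so in an order-preserving way.

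The key step is to use the one-sidedness to force $x$ to fix the solutions of one chosen equation, say $B(z) = 1$. List the solutions in $I$ as $z_{1,1} < z_{2,1} < \cdots$ accumulating at $e^{i\theta_0}$ (here $<$ means increasing argument, and there are only finitely many past any point, with $z_{1,1}$ the one of largest argument, or with the indices running the other way — the point is that there is a \emph{last} one nearest $e^{i\theta_1}$). An order-preserving bijection of a well-ordered-from-one-side sequence to itself must be the identity: the largest element must go to the largest, then the next, and so on by induction. Concretely, $x$ maps the arc $(z_{1,1}, e^{i\theta_1})$ — which contains no solution of $B = 1$ — onto an arc with the same property that is also a terminal subarc of $I$ abutting $e^{i\theta_1}$ (since $x$ fixes $e^{i\theta_1}$ and preserves order), and on it $B$ is injective; hence $x$ fixes $z_{1,1}$, and then inductively $x$ fixes every $z_{n,1}$. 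Once $x$ fixes $z_{1,1} \in I$, Lemma~\ref{lemma:type2intervals}'s argument applies verbatim to the subarc structure — or one simply observes directly that between consecutive solutions $B$ is injective, $x$ fixes both endpoints, so $x$ is the identity there — giving $x(z) = z$ on all of $I$. The type $1_b$ case is identical after reversing orientation, and the type $0$ case is even easier: there are globally only finitely many solutions of $B(z) = 1$ in $I$ (together with the endpoints, which $x$ fixes), so the same finite-order-preserving-bijection argument, now with a genuine first \emph{and} last element, pins all of them down and then injectivity of $B$ on the complementary subarcs finishes it.

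The main obstacle is making rigorous the claim that $x$ fixes the \emph{extreme} solution nearest the non-accumulation endpoint. This rests on two facts that must be invoked carefully: that $x$ fixes the endpoints $e^{i\theta_0}, e^{i\theta_1}$ of $I$ (from Lemma~\ref{lemma:bijective}, since singular points go to singular points and $x$ preserves the cyclic order of the finitely many intervals, so an interval mapped to itself has its endpoints fixed), and that the terminal subarc of $I$ at $e^{i\theta_1}$ containing no solution of $B(z) = 1$ is mapped by $x$ into itself — which follows because $x$ is an order-preserving self-homeomorphism of $I$ fixing $e^{i\theta_1}$, so it maps a neighbourhood of $e^{i\theta_1}$ in $I$ to a neighbourhood of $e^{i\theta_1}$ in $I$, and $x$ carries solutions of $B = 1$ to solutions of $B = 1$. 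Everything else is the routine induction and the injectivity of $B$ on subarcs already used in Lemma~\ref{lemma:type2intervals}.
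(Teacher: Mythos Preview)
Your proposal is correct and follows essentially the same approach as the paper: exploit the one-sidedness of the solution set of $B(z)=1$ in $I$ to pin down the extreme solution as a fixed point of $x$, then invoke the subarc argument of Lemma~\ref{lemma:type2intervals} to conclude $x$ is the identity on $I$. The paper's version is terser --- it simply observes that $x$ acts as a shift on the ordered solutions and that invertibility forces the shift to be trivial (else $x$ or $x^{-1}$ would send the extreme solution past the end of the sequence) --- but this is exactly the ``order-preserving bijection of a one-sided sequence is the identity'' argument you spell out.
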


\begin{proof} We prove the case for type $1_b$ intervals; the other cases are similar.

We may label the solutions in $I$ to $B(z)=1$ in increasing order of argument as $z_{n,1}$ for $n \ge 0$ (only).
Now, $x$ can only act as a shift on these solutions, but since $x$ is invertible it must fix the first point $z_{0,1}$ as otherwise
either $x$ or $x^{-1}$ would have to map $z_{0,1}$ to a point in $I$ of strictly smaller argument. Now considerations as in the
proof of  Lemma~\ref{lemma:type2intervals} show that $x$ fixes every point of $I$. 

\end{proof}

As a consequence we have:

\begin{lem}\label{lemma:allintervals} Let $x \in \G$. Then $x$ maps intervals of type $0$, $1_a$, $1_b$ and $2$
 to intervals of exactly the same type. Moreover, in the case when an endpoint is of type $1_{a,L}$ or $1_{b,L}$, it is
mapped under $x$ to another endpoint of exactly the same type.
\end{lem}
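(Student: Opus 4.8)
The plan is to exhibit $x$ as an orientation‑preserving homeomorphism of $\TT$ that intertwines $\Theta$ with itself, and then to observe that each feature used to distinguish the types is transported by such a map. First I would collect the structural facts already at hand. By Lemma~\ref{lemma:bijective} the map $x$ is a continuous bijection of $\TT$, hence a homeomorphism (a continuous bijection of a compact Hausdorff space). It carries the finite singularity set $\sigma(\Theta)\cap\TT$ onto itself, and by the observation recorded in the introduction --- that $\arg x(z)$ increases strictly with $\arg z$ --- it preserves orientation, so it preserves the cyclic order of the singularities. Consequently, if $I=(e^{i\theta_0},e^{i\theta_1})$ is an open arc between two consecutive singularities and $J=x(I)$, then $x$ restricts to an orientation‑preserving homeomorphism $\overline I\to\overline J$ taking the left endpoint $e^{i\theta_0}$ to the left endpoint of $J$ and the right endpoint $e^{i\theta_1}$ to the right endpoint of $J$. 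The key relation is $\Theta\circ x=\Theta$: it forces $x$ to restrict to a bijection of $\{z\in I:\Theta(z)=1\}$ onto $\{w\in J:\Theta(w)=1\}$.

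Next I would show the type of an arc is preserved. The type of $I$ is determined entirely by the solution set $S_I=\{z\in I:\Theta(z)=1\}$: since $\Theta$ is analytic near each interior point of $I$ and is not identically $1$ on any subarc (else it would be constant, contradicting the presence of singularities), $S_I$ has no interior accumulation point; hence $I$ is of type $0$ iff $S_I$ is finite, of type $1_a$ iff $S_I$ is infinite and accumulates only at $e^{i\theta_0}$, of type $1_b$ iff $S_I$ is infinite and accumulates only at $e^{i\theta_1}$, and of type $2$ iff $S_I$ accumulates at both endpoints. A homeomorphism $\overline I\to\overline J$ matching left endpoint to left endpoint sends $S_I$ bijectively onto $S_J$ and sends accumulation points to accumulation points respecting the endpoint correspondence, so $J$ has exactly the type of $I$.

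Finally I would treat the endpoint statement. A singularity $\xi_0=e^{i\theta_0}$ is of type $1_{a,L}$ precisely when (i) the arc immediately counterclockwise of $\xi_0$ has $\xi_0$ as a left endpoint at which solutions of $\Theta=1$ accumulate, (ii) the arc immediately clockwise of $\xi_0$ has only finitely many solutions of $\Theta=1$ near $\xi_0$, and (iii) $\lim_{\alpha\to\theta_0^-}\Theta(e^{i\alpha})=L$ along that clockwise arc. Write $\eta_0=x(\xi_0)$. Since $x$ is an orientation‑preserving homeomorphism fixing the cyclic order and bijecting solution sets, conditions (i) and (ii) transfer verbatim to $\eta_0$ (equivalently, apply the previous paragraph to the two arcs adjacent to $\xi_0$), so $\eta_0$ is of type $1_{a,L'}$ for some $L'$. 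To identify $L'$: if $w_k\to\eta_0$ from below along the clockwise arc of $\eta_0$, then $x^{-1}(w_k)\to\xi_0$ from below along the clockwise arc of $\xi_0$, and $\Theta(w_k)=\Theta\bigl(x(x^{-1}(w_k))\bigr)=\Theta(x^{-1}(w_k))\to L$, whence $L'=L$; the case $1_{b,L}$ is symmetric. The only genuinely delicate point is the bookkeeping at the singular points --- keeping track of which neighbouring arc maps to which and that the one‑sided limits are respected --- and this is exactly where the orientation‑preserving property of $x$ is used.
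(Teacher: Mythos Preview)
Your argument is correct. The paper does not actually supply a proof of this lemma: it merely announces the statement with the phrase ``As a consequence we have:'' immediately after Lemmas~\ref{lemma:type2intervals} and~\ref{lemma:type1intervals}. Your proposal is exactly the natural way to unpack that phrase, using the ingredients already assembled in the paper --- bijectivity of $x$ (Lemma~\ref{lemma:bijective}), the orientation--preserving property recorded in the introduction, and the defining relation $\Theta\circ x=\Theta$ --- to transport solution sets and their accumulation points, together with one--sided limits, from $I$ to $x(I)$. In fact your direct argument does not need to invoke Lemmas~\ref{lemma:type2intervals} or~\ref{lemma:type1intervals} at all, which is a minor clarification of the paper's ``As a consequence'' (those lemmas are about what happens \emph{within} a fixed interval, whereas this lemma is about type preservation, which follows straight from the homeomorphism property and the intertwining relation).
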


\begin{lem}\label{lemma:interchangingintervals} Let $I$ and $J$ be intervals of the same type and 
suppose that $y_1, y_2 \in \G$ map $I$ to $J$. If $y_1(z) = y_2(z)$ for some $z$ in $I$, then $y_1 = y_2$ on $I$. \end{lem}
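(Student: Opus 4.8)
The plan is to reduce the statement about two maps $y_1, y_2$ that both carry $I$ to $J$ to the already-proved results about a single map fixing an interval. The natural device is to compose: since $y_1, y_2\in\G$ and $\G$ is a group under composition, the map $x := y_1^{-1} \circ y_2$ again belongs to $\G$ (using that $\G$ is closed under composition and inversion, which follows from $B\circ y_i = B$), and it satisfies $x(J) = J$ — indeed $y_2(I) = J$ and then $y_1^{-1}(J) = I$, so actually we should track this carefully: $y_1^{-1}$ maps $J$ to $I$, so $y_1^{-1}\circ y_2$ maps $I$ to $I$. So set $x = y_1^{-1}\circ y_2$; then $x \in \G$ and $x(I) = I$.

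The second step is to apply the appropriate one of Lemma~\ref{lemma:type2intervals} or Lemma~\ref{lemma:type1intervals} according to the type of $I$. If $I$ is of type $0$, $1_a$ or $1_b$, then Lemma~\ref{lemma:type1intervals} gives immediately that $x(z) = z$ for all $z \in I$, with no hypothesis needed, so in fact $y_1 = y_2$ on $I$ regardless of the existence of a common value. If $I$ is of type $2$, then Lemma~\ref{lemma:type2intervals} applies: the hypothesis that $y_1(z) = y_2(z)$ for some $z \in I$ translates, via $y_1^{-1}$, into $x(z) = y_1^{-1}(y_2(z)) = y_1^{-1}(y_1(z)) = z$, i.e. $x$ has a fixed point in $I$; the lemma then yields $x(w) = w$ for all $w \in I$. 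In either case, unwinding $x = y_1^{-1}\circ y_2$ gives $y_2(w) = y_1(w)$ for all $w \in I$.

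One small point to verify along the way is that $I$ and $J$ being of the same type is exactly what makes $x(I) = I$ consistent with the structural lemmas — but actually we do not even need the hypothesis that $I$ and $J$ have the same type for the composition argument, since $y_1^{-1}\circ y_2$ maps $I$ to $I$ in any case; the same-type hypothesis is automatically consistent by Lemma~\ref{lemma:allintervals} and is really there only to make the statement non-vacuous (if no element of $\G$ mapped $I$ to $J$ the claim would be empty). I would mention this only briefly.

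I do not anticipate a genuine obstacle here; the one thing to be careful about is the bookkeeping of which direction $y_1^{-1}$ sends the intervals, and the verification that $y_1^{-1} \in \G$. The latter is immediate: if $B\circ y_1 = B$ on $\TT$ and $y_1$ is a bijection of $\TT$ (Lemma~\ref{lemma:bijective}), then replacing $z$ by $y_1^{-1}(z)$ gives $B = B\circ y_1^{-1}$, and $y_1^{-1}$ is continuous since a continuous bijection of the compact Hausdorff space $\TT$ is a homeomorphism. With that in hand the proof is a two-line composition argument plus a citation of the earlier lemmas.
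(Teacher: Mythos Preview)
Your proposal is correct and is essentially identical to the paper's own proof: the paper forms $y_2^{-1}y_1$ (you form $y_1^{-1}\circ y_2$, which is immaterial), observes that it maps $I$ to $I$ and fixes the point $z$, and then invokes Lemmas~\ref{lemma:type2intervals} and~\ref{lemma:type1intervals} to conclude it is the identity on $I$. Your additional remarks about $y_1^{-1}\in\G$ and the role of the same-type hypothesis are accurate elaborations the paper leaves implicit.
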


\begin{proof} In this case $y_2^{-1}y_1$ maps $I$ to $I$ and fixes a point of $I$; hence it fixes all points of $I$
by Lemmas \ref{lemma:type2intervals} 
and \ref{lemma:type1intervals} .
\end{proof}

We note that type $0$ intervals can only be mapped to each other if their images under $B$ are identical (including multiplicities of points).

   \section{1, 2 and $p$ singularities: a detailed description}
 
 Before analysing the general case, we consider the cases when the number of singularities
is 1 or 2, and then an arbitrary prime number. Here the number of possible symmetry
groups is very limited, and it is possible to give detailed descriptions of the corresponding groups.
 
 \subsection{One singularity}
 
 \begin{thm} If $B$ is a Blaschke product with a singularity at the point $e^{i \theta_0}$ only, then we have the following:

 \begin{enumerate}
 \item If $e^{i \theta_0}$ is of type $2$, then $\G = \mathbb{Z}$;
 \item If $e^{i \theta_0}$ is of type $1_a$ or $1_b$, the group is the trivial group; that is, $\G = \{e\}$.
 \end{enumerate}
 \end{thm}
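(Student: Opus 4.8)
## Proof Proposal

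The plan is to use the structural lemmas of the previous section, applied to the single interval $I = \TT \setminus \{e^{i\theta_0}\}$. Any $x \in \G$ must map singular points to singular points by the standing assumption, so $x(e^{i\theta_0}) = e^{i\theta_0}$, and hence $x(I) = I$. The two cases then split according to the type of the singularity, which by Lemma~\ref{lemma:allintervals} is the same as the type of the interval $I$.

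For case (2), suppose $e^{i\theta_0}$ is of type $1_a$ or $1_b$. Then $I$ is an interval of type $1_a$ or $1_b$ (the solutions of $B(z) = 1$ accumulate only at one endpoint, both endpoints being the same point $e^{i\theta_0}$ approached from the two sides). Since $x(I) = I$, Lemma~\ref{lemma:type1intervals} immediately gives $x(z) = z$ for all $z \in I$, and by continuity $x = \mathrm{id}$ on $\TT$. Hence $\G = \{e\}$.

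For case (1), suppose $e^{i\theta_0}$ is of type $2$, so $I$ is a type $2$ interval: the solutions $z_{n,1}$ of $B(z)=1$ in $I$ form a two-sided sequence $(z_{n,1})_{n \in \ZZ}$, ordered by increasing argument, accumulating at $e^{i\theta_0}$ from both sides. I would first observe that any $x \in \G$ with $x(I) = I$ must permute the set $\{z_{n,1} : n \in \ZZ\}$ while preserving the cyclic (in fact linear, since $I$ is an arc) order; being a bijection of $\TT$ (Lemma~\ref{lemma:bijective}) with $\arg x(z)$ strictly increasing in $\arg z$ on $I$, it must act as a shift $z_{n,1} \mapsto z_{n+k,1}$ for some fixed $k \in \ZZ$. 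By Lemma~\ref{lemma:type2intervals}, $x$ is completely determined on $I$ by the single value $x(z_{0,1}) = z_{k,1}$, so the map $\G \to \ZZ$, $x \mapsto k$, is a well-defined injective homomorphism. For surjectivity I must exhibit, for each $k$, an element of $\G$ realizing the shift by $k$; since the shifts compose additively, it suffices to produce a generator $x_1$ with $x_1(z_{n,1}) = z_{n+1,1}$ for all $n$. The natural construction: on each closed subarc $[z_{n,1}, z_{n+1,1}]$, $B$ is a homeomorphism onto $\TT$ fixing the point $1$ at both endpoints (equivalently, onto $\TT$ cut at $1$), so define $x_1$ on $[z_{n,1}, z_{n+1,1}]$ as $(B|_{[z_{n+1,1},z_{n+2,1}]})^{-1} \circ (B|_{[z_{n,1},z_{n+1,1}]})$; these pieces agree at the shared endpoints $z_{n+1,1}$ (both send it to $z_{n+2,1}$), so they glue to a continuous bijection of $I$ satisfying $B \circ x_1 = B$ on $I$, and extending by $x_1(e^{i\theta_0}) = e^{i\theta_0}$ gives a continuous map of $\TT$. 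Thus $\G \cong \ZZ$.

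The main obstacle is the surjectivity argument in case (1): one must check that the piecewise-defined $x_1$ is genuinely continuous at $e^{i\theta_0}$, i.e.\ that $x_1(z) \to e^{i\theta_0}$ as $z \to e^{i\theta_0}$ within $I$. This follows because $x_1$ maps $z_{n,1} \mapsto z_{n+1,1}$ and $z_{n,1} \to e^{i\theta_0}$ as $n \to \pm\infty$, together with monotonicity of $\arg x_1$ controlling the values between consecutive $z_{n,1}$; but it is the one point where the type $2$ hypothesis (accumulation from \emph{both} sides) is essential, as it is what guarantees the shift moves every neighbourhood of $e^{i\theta_0}$ into a smaller one on both sides rather than producing a discontinuity. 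Everything else is a direct application of Lemmas~\ref{lemma:bijective}, \ref{lemma:type2intervals}, \ref{lemma:type1intervals} and \ref{lemma:allintervals}.
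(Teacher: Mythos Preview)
Your proposal is correct and follows essentially the same approach as the paper: the type~$1$ case is disposed of directly by Lemma~\ref{lemma:type1intervals}, and the type~$2$ case is handled by constructing a shift generator and invoking Lemma~\ref{lemma:type2intervals} to show it generates $\G$. The only cosmetic difference is that the paper defines the generator by $z_{n,\lambda}\mapsto z_{n+1,\lambda}$ for all $\lambda$ at once (and deduces continuity on $I$ from monotonicity), whereas you build it by gluing local inverses of $B$ on the subarcs $[z_{n,1},z_{n+1,1}]$; these are the same map, and your explicit check of continuity at $e^{i\theta_0}$ is a point the paper leaves implicit.
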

 

\begin{figure}[htbp]
  \begin{center}
    \leavevmode
    {\centering\input{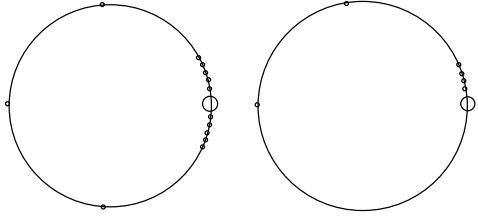}}
    \caption{\emph{Singularities of type $2$, and type $1_a$}}
    \label{fig:page8}
  \end{center}
\end{figure}

\begin{proof} We refer to Figure \ref{fig:page8}
at this point.
Without loss of generality, we assume that $e^{i \theta_0} = 1$.  

First suppose that $1$ is a point of type $2$.
Note that $B$ is continuously differentiable with nonvanishing derivative on the interval $I:=\TT\setminus\{1\}$, and for each $\lambda \in \TT$ we may enumerate
the points such that $B(z)=\lambda$ as a sequence
$\{z_{n,\lambda}\}_{n \in \ZZ}$, taken with increasing argument, such that all points $z_{0,\lambda}$ lie in the arc
between $z_{0,1}$ and $z_{1,1}$. Moreover, for each $\lambda \in \TT$ we have $z_{n,\lambda} \to 1$ as $n \to \pm \infty$.

We may define a mapping $x_0 \in \G$ by $x_0(z_{n,\lambda})=z_{n+1,\lambda}$ for each $n \in \ZZ$ and each $\lambda \in \partial \mathbb{D}$,
and $x_0(1)=1$.
 From our definition, it is clear that  the two functions $B \circ x_0$ and $B$ are equal; also
$x_0$ is (in argument) a monotonic increasing  bijection from $I$ to itself, and hence is continuous.

Now $x_0$ generates the entire group $\G$, for if $y \in \G$ then 
$y: z_{0,1} \mapsto z_{n,1}$ for some $n \in \ZZ$, and then $y=x_0^n$ by Lemma \ref{lemma:type2intervals}.

If $1$ is a point of type $1$, then necessarily $I$ is an interval of type 1, and the group is trivial, by
Lemma~\ref{lemma:type1intervals}.
\end{proof}

The complications begin with the next case.
 
 \subsection{Two singularities}

\begin{thm}
Suppose that $B$ is a Blaschke product with spectrum consisting of exactly two points of $\TT$. Then, depending on
the nature of the two intervals comprising $\TT \setminus \sigma(B)$, the group $\G$ is one of the following:
$\{e\}$, $\ZZ_2$, $\ZZ$, $\ZZ^2 \rtimes \ZZ_2$. The last of these cases is a semi-direct product, which may be presented
as 
\[
\G = \langle x_1, x_2, y: y^2 = e, x_1 x_2 = x_2 x_1, x_2 y = y x_1 \rangle.
\]
\end{thm}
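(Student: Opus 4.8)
The plan is to analyse $\G$ by a case division according to the types of the two open arcs $I$ and $J$ comprising $\TT\setminus\sigma(B)$, where we write $\sigma(B)=\{p,q\}$. The first step is to determine which pairs of types can occur. Each of $I,J$ is of type $0$, $1_a$, $1_b$ or $2$, since $B$ is analytic on an open arc and so the solutions of $B=\lambda$ inside the arc can accumulate only at its endpoints. Moreover, since the essential range of $B$ at a singularity is all of $\TT$ while finite one-sided limits of $\arg B$ on both adjacent sides would confine that range to at most two values, at each of $p$ and $q$ at least one adjacent arc-side carries infinitely many solutions of $B=1$ accumulating there. A short check then shows that, up to exchanging $I$ and $J$ and up to reflection, the admissible configurations are exactly: \textbf{(i)} both arcs of type $2$; \textbf{(ii)} one arc of type $2$ and the other of type $0$, $1_a$ or $1_b$; \textbf{(iii)} both arcs of type $1_a$ (equivalently both singularities of type $1_a$), or symmetrically both of type $1_b$. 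Two type-$0$ arcs, or a type-$0$ arc together with a type-$1$ arc, violate the constraint at a singularity and cannot occur.

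The engine of the proof is the subgroup $H=\{x\in\G:x(I)=I\}=\{x\in\G:x(J)=J\}$. Each $x\in\G$ is a bijection of $\TT$ (Lemma~\ref{lemma:bijective}) and permutes $\{I,J\}$ (Lemma~\ref{lemma:allintervals}), so $H$ is normal in $\G$ with $[\G:H]\le 2$. On a type-$0$, $1_a$ or $1_b$ arc the restriction of any $x\in H$ is the identity, by Lemma~\ref{lemma:type1intervals}; on a type-$2$ arc $I$ it is an integer power of the canonical shift $x_I$, by Lemma~\ref{lemma:type2intervals}, and $x_I$ extends continuously to fix both endpoints of $I$, while the powers taken on $I$ and on $J$ are independent of one another. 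Hence $H\cong\ZZ^k$ with $k$ the number of type-$2$ arcs among $I,J$: thus $H=\{e\}$ in case (iii), $H\cong\ZZ$ in case (ii), and $H\cong\ZZ^2$ in case (i), generated in the last case by the commuting elements $x_1$ (equal to $x_I$ on $I$ and to the identity on $J$) and $x_2$ (the identity on $I$, and $x_J$ on $J$). As for the index: by Lemma~\ref{lemma:allintervals} an element of $\G$ interchanging $I$ and $J$ can exist only when $I$ and $J$ have the same type, so in case (ii) none does and $\G=H\cong\ZZ$; and since elements of $\G$ preserve orientation, any interchanging element must also swap the singularities $p$ and $q$. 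In case (iii) the identity $B\circ y=B$ together with this swap then forces the two singularities, of types $1_{a,L}$ and $1_{a,L'}$ (or $1_{b,L}$ and $1_{b,L'}$), to satisfy $L=L'$; when this holds an interchanging involution does exist and $\G\cong\ZZ_2$, and otherwise $\G=\{e\}$.

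The heart of the proof is case (i): I claim an interchanging element always exists when both arcs are of type $2$. On each arc the solutions of $B=\lambda$ accumulate at both endpoints, so a fixed continuous branch of $\arg B$ is a strictly increasing bijection from the arc onto $\mathbb{R}$; writing $\phi_I\colon\mathbb{R}\to I$ and $\phi_J\colon\mathbb{R}\to J$ for the inverse maps, so that $B(\phi_I(t))=B(\phi_J(t))=e^{it}$, the composite $\psi:=\phi_J\circ\phi_I^{-1}\colon I\to J$ is an orientation-preserving homeomorphism with $B\circ\psi=B|_I$. It extends continuously so as to interchange $p$ and $q$, and the map $y$ defined by $y|_I=\psi$ and $y|_J=\psi^{-1}$ then belongs to $\G$, satisfies $y^2=e$, and interchanges $I$ and $J$. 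Consequently $[\G:H]=2$ and $\G=H\cup Hy=\langle x_1,x_2,y\rangle$; the normal subgroup $H\cong\ZZ^2$ has index $2$, and $y\notin H$ with $y^2=e$ exhibits $\G$ as a split extension $\ZZ^2\rtimes\ZZ_2$. Finally, evaluating $yx_1y$ and $yx_2y$ on the enumerated solution sequences $z_{n,\lambda}$ in the two arcs shows that conjugation by $y$ interchanges $x_1$ and $x_2$, i.e.\ $yx_1y=x_2$; rewritten as $x_2y=yx_1$ and combined with $x_1x_2=x_2x_1$ and $y^2=e$, this is precisely the stated presentation.

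I expect the main obstacle to be this construction in case (i): verifying honestly that the homeomorphism $\psi$ assembled from the two branches of $\arg B$ extends to a continuous, orientation-preserving bijection of $\TT$ lying in $\G$, and tracking orientations carefully enough to obtain $yx_1y=x_2$ rather than $yx_1y=x_2^{-1}$ (the latter would yield a non-isomorphic group). Orientation-preservation is also what keeps the rest of the case analysis clean: an orientation-preserving homeomorphism of $\TT$ fixing two points maps each complementary arc to itself, which is why an interchanging element must swap the singularities and hence, in case (iii), match their boundary limits.
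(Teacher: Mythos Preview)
Your argument is correct and follows essentially the same route as the paper: the normal subgroup $H$ of arc-preserving elements, the identification $H\cong\ZZ^k$ with $k$ the number of type-$2$ arcs, and the case split according to whether an interchanging element exists. Your construction of $y$ via the $\arg B$ parametrisations $\phi_I,\phi_J$ is a clean explicit version of the paper's construction (which picks base points $i$ and $z_-$ and extends via shifts); one small aside---your worry that $yx_1y=x_2^{-1}$ would yield a non-isomorphic group is unfounded, since replacing the generator $x_2$ by $x_2^{-1}$ shows the two presentations define isomorphic groups.
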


We refer to \cite{algebra}, for example, for more on semi-direct products. Recall that if $\G \cong K \rtimes \mathcal{S}$, then
$\G$ has a normal subgroup isomorphic to $K$ with quotient $\G/K \cong \mathcal{S}$. These do not determine $\G$ uniquely,
but a presentation in terms of generators and relations can be given.
\begin{proof}
 
We may suppose, without loss of generality, that $B$ is a Blaschke product with singularities at $1$ and $-1$.  
Call the resulting intervals $I_1$ and $I_2$, where $I_1$ denotes the upper-half of the unit circle. 

Note first that $\G$ contains a normal subgroup 
\[
K:= \{x \in \G: \, x(I_k)=I_k \hbox{ for each } k \}.
\]
Indeed, we may define a homomorphism $\psi: \G \to \ZZ_2=\{[0],[1]\}$ by defining $\psi(x)=[r]$ if $x(I_k)=I_{k+r}$
for each $k$ (here intervals are numbered modulo 2). Clearly any $x \in \G$ either fixes both intervals $I_k$ or it
interchanges them. Now $\G/K \cong \psi(\G)$, which is either $\{e\}$ or $\ZZ_2$. We consider three cases.
 
 \begin{enumerate}
 \item[Case 1.] Both intervals are of type $2$.
 \item[Case 2.] One interval is type $1$ (or type $0$) and one interval is type $2$.
 \item[Case 3.] Both intervals are type $1$.
 \end{enumerate}

 {\bf Case 1.} (Both intervals type $2$.) From Lemma~\ref{lemma:type2intervals}, we know that there exist $x_j \in \G$ such that $x_j$ 
shifts the elements of $I_j$ (from $z_{n, \lambda}$ to $z_{n + 1, \lambda}$) for $j = 1, 2$ and each $x_j$ leaves 
the elements of the remaining interval fixed. Again as in the case of one interval, we may argue that 
$x_j$ is continuous and it follows that $x_j(1) = 1$ and $x_j(-1) = -1$. As above, we see from this that 
$B(x_j(e^{i \theta})) = B(e^{i \theta})$ for $\theta \ne 0, \pi$.
 
Now consider the point $i$. Since $i$ is not a singularity, the radial limit of $B$ exists at $i \in I_1$ and we may write $B(i) = \lambda$. 
Choose a point, $z_{-} \in I_2$ such that $B(z_{-}) = \lambda$. 
We claim that there is a unique element $y \in \G$ such that $y$ interchanges $i$ and $z_-$: for on  
$I_1$ we may define $y(x_1^n(i))=x_2^n (z_-)$, and define the image under $y$ of the remaining subintervals of $I_1$ in the unique way that
guarantees that $y$ is continuous and $B \circ y=B$. Similarly, we define $y(x_2^n (z_-))=x_1^n( i)$, and fill in the values on $I_2$
in the same way. Note that $y^2=e$.

We claim that \[
\G = \langle x_1, x_2, y: y^2 = e, x_1 x_2 = x_2 x_1, x_2 y = y x_1 \rangle.
\]
Our argument above shows that the right-hand side is a subset of $\G$. Now choose an element $x \in \G$. 

Note that we have various possibilities:

\begin{enumerate}
\item[$\bullet$] $x = e$;
\item[$\bullet$] $x|I_1 = e$, $x|I_2 \ne e$. In this case $x = x_2^n$ for some $n \in \mathbb{Z}$. The case $x|I_1 \ne e$, $x|I_2 = e$ is handled in the same way.
\item[$\bullet$] $x(I_j) = I_j$ and $x|I_j \ne e$ for $j = 1, 2$. Then $x = x_1^n \circ x_2^m$ for $n, m \in \mathbb{Z} \setminus \{0\}$. 
\item[$\bullet$] $x(I_1) = I_2$. In this case $x \circ y(I_1) = I_1$ and we obtain the result from the previous cases. 
\end{enumerate}

This completes the proof in which we have two intervals of type $2$.

\bigskip

{\bf Case 2.}  (One interval type $2$, one not.) In this case one interval, say $I_1$, is type $1$ (or type $0$) and $I_2$ is type $2$. 
By Lemma~\ref{lemma:interchangingintervals} we cannot map $I_1$ to $I_2$, and therefore by 
Lemma~\ref{lemma:type1intervals} every element of $\G$ must fix the points of $I_1$. 
Let $x_0$ denote the shift of elements in $I_2$. Then $$G = \langle x_0 \rangle \cong \ZZ.$$

{\bf Case 3.} (Neither interval type $2$.)  In this case, $B$ has a singularity at $1$ and $-1$. If there were a type $0$ interval, one interval would have to be type $2$. Thus $I_1$ and $I_2$ are type $1$ intervals.  Further, since there are no type $2$ intervals, each singularity is a point of the same type. Thus, the Blaschke product has one-sided limits at each singularity. We have several cases:

\smallskip
{\bf Case 3a: The one-sided limits exist and are not equal.} Without loss of generality, we assume that $\lim_{\theta \to \pi^+} B(e^{i \theta}) \ne \lim_{\theta \to 0^+} B(e^{i\theta})$. 
 By Lemma~\ref{lemma:type1intervals}, the only maps $x \in \G$ for 
which $x(I_j) = I_j$ are the identity. The only other possibility is that there exists $y \in G$ such that $y$ 
interchanges $I_1$ and $I_2$ (see Figure \ref{fig:page10}). In this case, since the points where $B$ assumes the value $1$ are mapped to each other, 
we must map $-1$ to $1$. However, for $\theta \ne 0, \pi$ we have $B(y(e^{i \theta})) = B(e^{i \theta})$ with different
one-sided limits at $1$ and $-1$, and $y$ is continuous at $1$ and $-1$, so this is impossible and the group must be the trivial group.

\begin{figure}[htbp]
  \begin{center}
    \leavevmode
    {\centering\input{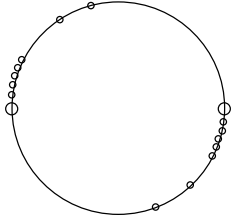}}
    \caption{\emph{The case where $y$ interchanges $I_1$ and $I_2$}}
    \label{fig:page10}
  \end{center}
\end{figure}

\smallskip
{\bf Case 3b: The one-sided limits exist and are equal.} Again, we assume without loss of generality that $\lim_{\theta \to \pi^+} B(e^{i\theta}) = \lim_{\theta \to 0^+} B(e^{i \theta})$. In this case, there is an element $y$ of order $2$ that maps the sequence of points $\{z_{n, \lambda, j}\} \in I_j$ for which $B(z_{n, \lambda, j}) = \lambda$ to the corresponding points in the other interval. Our map is $y(z_{n, \lambda, 1}) = z_{n, \lambda, 2}$ and our group is $\mathbb{Z}_2$.

\end{proof}

\subsection{Prime number of singularities}

In what follows, we let $K = \{y \in \G: y(I_k) = I_k~\mbox{for all}~ k\}$. Define $\psi: \G \to \mathbb{Z}_n$ as follows:
for $y \in \G$  such that $y(I_j) = I_{j + r} \pmod n$, let $\psi(y) = [r]$.  Note that  $K = \ker \psi$, $K \cong \mathbb{Z}^k$ 
for some $k$ with $0 \le k \le n$ and that $k$ is the number of intervals of type $2$.

\begin{thm}\label{prime}
Suppose that $B$ is a Blaschke product with spectrum consisting of exactly  $p$ points of $\TT$, where $p \ge 2$ is
a prime number. Then, depending on
the nature of the  intervals comprising $\TT \setminus \sigma(B)$, the group $\G$ is one of the following:
$\{e\}$, $\ZZ_p$, $\ZZ^r$ for some $1 \le r < p$, $\ZZ^p \rtimes \ZZ_p$. The last of these cases is a semi-direct product, which may be presented
as 
\[
\G = \langle x_1, x_2 \ldots, x_p, y: y^p = e,  x_j x_k = x_k x_j, ~\mbox{and}~ y x_j = x_{j + 1} y \pmod p\rangle.
\]
\end{thm}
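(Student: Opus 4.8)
The plan is to run the same case analysis as in the two-singularity case, using the homomorphism $\psi:\G\to\ZZ_p$ with kernel $K$ already introduced, together with the structural lemmas of Sections 3--4. The first step is to record that $K\cong\ZZ^k$, where $k$ is the number of type-$2$ intervals: by Lemmas \ref{lemma:type2intervals} and \ref{lemma:type1intervals}, an element $x\in K$ is determined on each type-$0$ or type-$1$ interval (it must be the identity there) and on each type-$2$ interval $I$ by the single integer giving how far it shifts the sequence $\{z_{n,1}\}$; conversely every such choice of integers arises from an element of $\G$ (extend by the identity on the other intervals, filling in subarcs as in the one-singularity theorem), and these shifts on disjoint intervals commute. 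So $K\cong\ZZ^k$.

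Next I would analyse the image $\psi(\G)\le\ZZ_p$. Since $p$ is prime, $\psi(\G)$ is either trivial or all of $\ZZ_p$. If it is trivial then $\G=K\cong\ZZ^k$, and by Lemma \ref{lemma:allintervals} a nontrivial generator of a cyclic image would have to permute the $p$ intervals in a single $p$-cycle, hence map type-$2$ intervals to type-$2$ intervals; a $p$-cycle acts transitively, so either all $p$ intervals are type $2$ (giving $k=p$) or none is (giving $k=0$). This yields the cases $\{e\}$ (when $k=0$ and $\psi(\G)$ trivial) and $\ZZ^r$ with $1\le r<p$ (when $0<k<p$, which forces $\psi(\G)$ trivial since then not all intervals have the same type and no $p$-cycle can preserve types); note $k$ cannot be $p$ in this branch because then $\psi(\G)=\ZZ_p$ as shown below.

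It remains to treat the case $\psi(\G)=\ZZ_p$, which by the transitivity remark forces all $p$ intervals to be of type $2$, so $K\cong\ZZ^p$ with generators $x_1,\dots,x_p$, where $x_j$ shifts $I_j$ and fixes the other intervals pointwise. Pick $y\in\G$ with $\psi(y)=[1]$, i.e.\ $y(I_j)=I_{j+1}$. Conjugation by $y$ is an automorphism of $K$; since $yx_jy^{-1}$ shifts $y(I_j)=I_{j+1}$ and fixes the rest, it equals $x_{j+1}^{\pm1}$, and orientation-preservation of $y$ (the observation in the introduction that $\arg x$ increases) pins down the sign, giving $yx_jy^{-1}=x_{j+1}$ (indices mod $p$). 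Then $y^p\in K$ commutes with every $x_j$ under this relation... but more directly, after replacing $y$ by $y x_1^{m_1}\cdots x_p^{m_p}$ for suitable integers $m_j$ one can arrange $y^p=e$: compute $y^p$ in $K\cong\ZZ^p$, observe conjugation by $y$ cyclically permutes coordinates, and solve the resulting linear system over $\ZZ$ (the relevant matrix $I-(\text{cyclic shift})$ has the right image because $y^p$ is fixed by the shift, hence lies in the diagonal, and one adjusts coordinatewise). With such a $y$, the subgroup $\langle K, y\rangle$ is exactly $\ZZ^p\rtimes\ZZ_p$ with the stated presentation, and it is all of $\G$: any $x\in\G$ has $\psi(x)=[r]$, so $xy^{-r}\in K$. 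The main obstacle is this last normalization — showing one may choose the order-$p$ complement $y$ so that $y^p=e$ rather than merely $y^p\in K$ — which is a small exercise in solving $\ZZ$-linear equations exploiting that $y^p$ is central in $\langle K,y\rangle$ and hence fixed by the cyclic permutation, but it must be done carefully.
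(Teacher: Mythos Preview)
Your case split loses the outcome $\G\cong\ZZ_p$. You assert that $\psi(\G)=\ZZ_p$ ``forces all $p$ intervals to be of type $2$'', but transitivity of the $p$-cycle only forces all intervals to have the \emph{same} type (Lemma~\ref{lemma:allintervals}); they may all be of type $1$ (all $1_{a,L}$ or all $1_{b,L}$ with a common one-sided limit $L$). In that situation $K=\{e\}$ by Lemma~\ref{lemma:type1intervals}, yet a rotation $y$ with $y(I_j)=I_{j+1}$ can exist, giving $\G=\langle y\rangle\cong\ZZ_p$. Your enumeration covers $k=0$ with $\psi(\G)$ trivial ($\{e\}$), $0<k<p$ ($\ZZ^k$), and $k=p$ (the semidirect product), but the sub-case $k=0$ with $\psi(\G)=\ZZ_p$ is simply missing; this is exactly the $\ZZ_p$ item in the statement, and the paper handles it separately.

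On the normalization of $y$ that you flag as the ``main obstacle'': your linear-algebra fix is correct (one computes $(y x_1^{m_1}\cdots x_p^{m_p})^p=(x_1\cdots x_p)^{a+\sum m_j}$ if $y^p=(x_1\cdots x_p)^a$, so $m_1=-a$, $m_j=0$ works), but it is avoidable. Construct $y$ directly by choosing one solution $z_{0,1,j}$ of $B=1$ in each $I_j$ and declaring $y(z_{n,\lambda,j})=z_{n,\lambda,j+1}$ (indices mod $p$), filling in subarcs as in the one-singularity case; then $y^p$ fixes each $z_{0,1,j}$ and is the identity on every $I_j$ by Lemma~\ref{lemma:type2intervals}. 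This is what the paper does (implicitly), and it sidesteps the issue entirely.
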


 \begin{proof} We suppose that the $p$ singularities of $B$  
divide the circle into intervals $I_1, \ldots, I_p$.  
Note that if $\psi: \G \to \mathbb{Z}_p$ is the map defined in the paragraph preceding the proof of Theorem~\ref{prime}, then $\psi(\G)$ can be only one of two groups: 
the trivial group $\{e\}$ or the group $\mathbb{Z}_p$ and $\G/K \cong \mathcal{S}$ where $S$ is a subgroup of $\mathbb{Z}_p$. 
Thus, $\G/K \cong \{e\}$ or $\G/K \cong \mathbb{Z}_p$. Since we already know $K$, this gives us a description of $\G/K$, but we give a different way to visualize the group below.

We suppose that $\G$ is nontrivial. (The trivial  case can happen if each interval is of type $1$, but the singularities are not all of the
same type, or we have intervals of type $1$ and type $0$.)
 
 \smallskip
{\bf Case 1.} In case $\psi(\G) = \mathbb{Z}_p$, we have $\G/K \cong \mathbb{Z}_p$.  If we choose an element $y \in \G$ that is not mapped to the identity, then $y$ must have order $p$. Therefore, we may assume that $y(I_j) = I_{j + 1}$. Consequently, every interval is of the same type. There are three cases to consider: each interval of type $2$, each of type $1$ and each of type $0$. 

Now, we cannot have every interval of type $0$, for then there are no singularities.

So suppose every interval is type $2$. Then $K$ is nontrivial and we have shifts $x_1, \ldots, x_p$ 
corresponding to intervals $I_j$, for $j = 1, \ldots, p$, and 
$$\G = \langle x_1, x_2 \ldots, x_p, y: y^p = e, x_j x_k = x_k x_j \, \forall j,k,~\mbox{and}~ y x_j = x_{j + 1} y \pmod p\rangle.$$

The remaining case is the case in which every interval is type $1$ (with $B$ having equal one-sided limits). In this case, $K$ is trivial and 
 $$\G = \langle y: y^p = e\rangle \cong \mathbb{Z}_p.$$
 
{\bf Case 2.} In case $\psi(\G) = \{e\}$, there is no nontrivial rotation. Note that because there are singularities, not all intervals are of type $0$.  Thus, we know that at least two intervals are of different types. 
Consequently, we must leave all intervals of type $0$ and type $1$ fixed and we may shift intervals of type $2$ only. 
Thus, if there are $k$ intervals of type $2$ we have $k$ generators that shift the points of the corresponding intervals 
of type $2$, and we have no rotations. Clearly $k < p$.
Therefore $\G \cong \mathbb{Z}^k.$

 \end{proof}

  \section{General case} 

\bigskip

We use the same notation as above for the case in 
which we have $n$ singularities: $K = \{y \in \G: y(I_j) = I_j ~\mbox{for all}~ j\}$. 
From the discussion in the preceding sections it is clear that in the group of invariants, there are only certain possibilities. 
First, an element $x \in \G$ may map every interval to itself. If this is the case, then $x \in K$.
 If $x \notin K$, then $x$ maps some interval $I_j$ to $I_k$ where $j \ne k$. 
As we have seen, $x$ can be written in terms of elements that shift the points of a specific interval and leave all other intervals fixed, 
and elements that move one interval to another.

Recall that 
for $y \in \G$  satisfying $y(I_j) = I_{j + r} \pmod n$ we defined $\psi(y) = [r]$.  Also recall that  $K = \ker \psi$, $K \cong \mathbb{Z}^k$ 
for some $k$ with $0 \le k \le n$ and that $k$ is the number of intervals of type $2$.

We begin with a brief discussion of the case of four singularities, which will help with an understanding of the
general (composite $n$) case.

 \begin{thm} Suppose that the Blaschke product $B$ has exactly 4 singularities on $\TT$. Then the group $\G$ is one of the following possibilities:
$\{e\}$, $\ZZ_2$, $\ZZ_4$, $\ZZ^k$ ($k=1$, $2$ or $3$), $\ZZ^2 \rtimes \ZZ_2$, $\ZZ^4 \rtimes \ZZ_4$.
\end{thm}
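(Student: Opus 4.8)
The plan is to follow the template established in the prime case (Theorem~\ref{prime}) but to account for the fact that $\ZZ_4$ has a nontrivial proper subgroup, so that the rotation quotient $\psi(\G) \le \ZZ_4$ can now also equal $\ZZ_2$, and moreover the rotation subgroup of $\ZZ_4$ forces only a $2$-periodicity on the interval types rather than full $4$-periodicity. Concretely, I would label the intervals $I_1,\dots,I_4$ cyclically, let $K = \ker\psi$ with $K \cong \ZZ^k$ (where $k \in \{0,1,2,3,4\}$ is the number of type $2$ intervals, by the remarks preceding the theorem), and observe $\psi(\G)$ is a subgroup of $\ZZ_4$, hence $\{e\}$, $\ZZ_2$, or $\ZZ_4$. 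The theorem then follows by enumerating these three possibilities for $\psi(\G)$ and, within each, the compatible configurations of interval types, using Lemmas~\ref{lemma:type1intervals}, \ref{lemma:type2intervals}, \ref{lemma:allintervals}, and \ref{lemma:interchangingintervals} to pin down $\G$ exactly.

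First, if $\psi(\G) = \{e\}$, then every element of $\G$ fixes each $I_j$ setwise; by Lemmas~\ref{lemma:type1intervals} and \ref{lemma:type2intervals} such an element is the identity on every type $0$ or type $1$ interval and a power of the corresponding shift $x_j$ on each type $2$ interval, and these shifts commute and generate freely, so $\G \cong K \cong \ZZ^k$ with $0 \le k \le 3$ (we cannot have all four of type $2$ here without also getting the rotation, but in any case $k=4$ would force $\psi(\G)=\ZZ_4$ — actually $k$ can be $0,1,2,3$; $k=0$ gives $\{e\}$). Second, if $\psi(\G) = \ZZ_4$, pick $y$ with $\psi(y)$ a generator, so $y(I_j) = I_{j+1}$ and $y^4 \in K$; arguing as in the prime case one normalizes so that $y^4 = e$, all four intervals have the same type, and one splits into: all type $0$ (impossible, no singularities), all type $1$ with equal one-sided limits (then $K = \{e\}$, $\G = \langle y : y^4 = e\rangle \cong \ZZ_4$), or all type $2$ (then $K \cong \ZZ^4$ and $\G = \langle x_1,\dots,x_4, y : y^4 = e, \ x_j x_k = x_k x_j, \ y x_j = x_{j+1} y \pmod 4\rangle \cong \ZZ^4 \rtimes \ZZ_4$). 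Third — the genuinely new case — if $\psi(\G) = \ZZ_2$, pick $y$ with $\psi(y)$ the nontrivial element, so $y$ swaps $I_1 \leftrightarrow I_3$ and $I_2 \leftrightarrow I_4$, whence $I_1, I_3$ share a type and $I_2, I_4$ share a type (by Lemma~\ref{lemma:allintervals}), and $y^2 \in K$; one then analyzes the possibilities for the (common) types of the two orbits $\{I_1,I_3\}$ and $\{I_2,I_4\}$. If neither orbit is type $2$, then $K = \{e\}$ and the only nontrivial elements are order-$2$ swaps, giving $\G \cong \ZZ_2$; if exactly one orbit is type $2$, one gets $\ZZ^2 \rtimes \ZZ_2$ (the two shifts on the two type-$2$ intervals, conjugated into each other by $y$); if both orbits are type $2$, one would get $\ZZ^4 \rtimes \ZZ_2$ — so I must check this last configuration is actually impossible or else collapses. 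Here is the key point: if all four intervals are type $2$ and $\G$ contains a swap $y$ with $y(I_j)=I_{j+2}$, one should show $\G$ also contains an element realizing the full rotation $I_j \mapsto I_{j+1}$ (so really $\psi(\G)=\ZZ_4$, not $\ZZ_2$), OR that such a $y$ cannot exist for type reasons tied to the one-sided-limit data $L$ at the singularities; resolving which of these holds, and thereby ruling out $\ZZ^4\rtimes\ZZ_2$ from the final list, is the main obstacle.

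To handle that obstacle I would examine the one-sided limits more carefully: at each singularity between consecutive type $2$ intervals there is no well-defined limit, but the existence of $x \in \G$ mapping $I_j$ bijectively and order-preservingly onto $I_{j+r}$ is equivalent, by Lemma~\ref{lemma:interchangingintervals}, to a compatibility of the "boundary data" of $B$ on the two intervals (the matching of zero-value points $z_{n,\lambda}$ from both ends). I expect that once a single interval-to-nonadjacent-interval map exists in the four-type-$2$ situation, composing it appropriately with, or comparing it to, the internal shifts $x_j$ produces a map $I_j \to I_{j+1}$, collapsing the case into $\psi(\G) = \ZZ_4$; alternatively the configuration is genuinely realizable but then $\G$ is still $\ZZ^4 \rtimes \ZZ_4$ because the swap together with the shifts generates the would-be rotation. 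Either way the list in the statement is exhausted. The remaining cases are routine: in the $\psi(\G)=\ZZ_2$, one-or-zero-type-$2$-orbit situations one simply writes down generators (the swap $y$ of order $2$, and at most two commuting shifts interchanged by $y$) and identifies the group with $\ZZ_2$ or $\ZZ^2 \rtimes \ZZ_2$ via Lemmas~\ref{lemma:interchangingintervals} and \ref{lemma:type2intervals}, exactly as in Case~1 of the two-singularity theorem. I would close by noting that each listed group does occur, by exhibiting Blaschke products with the corresponding configuration of four interval types (using Berman's theorem to realize prescribed type $1$ behavior and Lemma~\ref{lemma:factorization}-style constructions for type $2$), mirroring the existence arguments given earlier in the paper.
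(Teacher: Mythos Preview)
Your overall strategy is the same as the paper's: exhibit $K=\ker\psi\cong\ZZ^k$ (with $k$ the number of type-$2$ intervals), note $\psi(\G)\le\ZZ_4$ is one of $\{e\},\ZZ_2,\ZZ_4$, and enumerate configurations. The paper organizes by interval type first rather than by $\psi(\G)$, but this is cosmetic.

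The issue is the case you flag as ``the main obstacle'': all four intervals of type $2$ but $\psi(\G)=\ZZ_2$. This case is simply empty, and the paper disposes of it in one line (its Case~1 just says ``we can rotate with $r$''). The reason is already in the two-singularity proof: between \emph{any} two type-$2$ intervals $I$ and $J$ one can build an order-preserving continuous bijection $\phi:I\to J$ with $B\circ\phi=B$, by picking any $z\in I$ and any $w\in J$ with $B(z)=B(w)$ and filling in via Lemma~\ref{lemma:interchangingintervals} (this is exactly the construction of $y$ in Case~1 of the two-singularity theorem). Concatenating four such maps $I_j\to I_{j+1}$ yields an $r\in\G$ with $\psi(r)$ a generator of $\ZZ_4$; one then adjusts the choices of basepoints so that $r^4=e$. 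Hence $k=4$ forces $\psi(\G)=\ZZ_4$ outright, and $\ZZ^4\rtimes\ZZ_2$ never appears. You actually state this correctly in your first case (``$k=4$ would force $\psi(\G)=\ZZ_4$'') but then forget it and re-open the question in your third case; there is nothing further to resolve. The existence remark at the end is extra: the theorem only claims the list is exhaustive, and the paper does not verify that each group is realized.
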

 
\begin{proof}
As above, we note that $\G$ has a normal subgroup $K$ isomorphic to $\ZZ^k$ for some $0 \le k \le 4$, with quotient $\G/K$
isomorphic to either
$\{e\}$, $\ZZ_2$ or $\ZZ_4$. Which cases can occur now depends on the types of the intervals $I_1,\ldots,I_4$
comprising $\TT \setminus \sigma(B)$.
 \begin{enumerate}
 \item[Case 1.] All intervals type $2$. Then we have four shifts, $x, y, z, w$, and we can rotate with $r$, where $r^4 = e$. In this case, the group is 
 $$\langle x, y, z, w, r: r^4 = e, xy = yx, \ldots, zw=wz; rw = x r, \ldots, \rangle \cong \ZZ^4\rtimes \ZZ_4.$$
 \item[Case 2.] One interval type $1$, three type $2$. Then $G = \mathbb{Z}^3$ and there is no rotation.
 \item[Case 3.] Three intervals type $1$, one type $2$. Then $G = \mathbb{Z}$ and there is no rotation.
 \item[Case 4.] Two intervals type $2$, two intervals type $1_{a, L}$ (or two intervals type $1_{b, L})$: Then to get a
nontrivial quotient, we must have the type $1$ intervals alternating with the type $2$ intervals, and agreement of the one-sided limits at the points of type $1$.  In this case, we have $G$ described by 
 $$\langle x, y, r: r^2 = e, r x = y r, r y = x r, x y = y x\rangle \cong \ZZ^2 \rtimes \ZZ_2.$$ 
 \end{enumerate}
 The other possibilities (two type $2$ and two type $1$ that are not alternating, and cases including type $0$ intervals) can be handled in the same way.  
 
\end{proof}

We can summarize the general result as follows, noting that some of the cases described below may not occur. 
For example, it is not possible to have $k = 3$ and $d = 2$ in the following theorem.  

 \begin{thm} Suppose that $B$ has exactly $n$ singularities on $\TT$. Then  $\G \cong
\ZZ^k \rtimes \ZZ_d$ for some $0 \le k \le n$ and $1 \le d \le n$ with $d|n$. Here $k$ denotes the
number of intervals of type 2, and $d$ the number of cyclic permutations of the $n$ intervals
comprising $\TT \setminus \sigma(B)$ that map each interval onto another of the same type.
\end{thm}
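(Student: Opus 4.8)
The plan is to exhibit $\G$ as a split extension $1\to K\to\G\to\ZZ_d\to 1$ of the ``rotation'' quotient by the ``shift'' subgroup, and to read off $k$ and $d$ from the combinatorics of the intervals. Write $I_1,\dots,I_n$ for the intervals of $\TT\setminus\sigma(B)$ in cyclic order, and define $\psi\colon\G\to\ZZ_n$ by $\psi(x)=[r]$ when $x(I_j)=I_{j+r}$ for all $j$ (indices mod $n$). By Lemma~\ref{lemma:bijective} each $x\in\G$ is a continuous bijection of $\TT$ permuting the $I_j$; since $x$ is orientation-preserving (see the introduction) and sends successive intervals to successive ones, the induced permutation of $\{I_1,\dots,I_n\}$ is a cyclic rotation, so $\psi$ is a well-defined homomorphism. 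Put $K=\ker\psi=\{x\in\G:x(I_j)=I_j\ \text{for all }j\}$, giving an exact sequence $1\to K\to\G\to\psi(\G)\to1$. Since $\psi(\G)$ is a subgroup of the cyclic group $\ZZ_n$, it is cyclic of order $d$ for some $d\mid n$; it remains to identify $K$ with $\ZZ^k$, to identify $d$, and to split the sequence.

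First, $K\cong\ZZ^k$. By Lemma~\ref{lemma:type1intervals} an element of $K$ is the identity on every interval of type $0$, $1_a$ or $1_b$. On each type-$2$ interval $I_j$ there is a shift $x_j\in\G$, namely the shift of $I_j$ extended by the identity off $I_j$ (this lies in $\G$ exactly as in the one- and two-singularity theorems, the shift extending continuously to the closed arc $\overline{I_j}$ and fixing its endpoints), and by Lemma~\ref{lemma:type2intervals} any $x\in K$ agrees on $I_j$ with a unique power of $x_j$. The shifts of distinct intervals commute, being supported on disjoint arcs, so $x\mapsto(m_j)$, where $x|_{I_j}=x_j^{m_j}$ on the type-$2$ intervals, is an isomorphism $K\cong\ZZ^k$ with $k$ the number of type-$2$ intervals.

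Next, $d$ equals the number of type-preserving cyclic permutations. By Lemma~\ref{lemma:allintervals} every $x\in\G$ carries each interval, and each singular endpoint, to one of exactly the same type, so the rotation $\psi(x)$ is type-preserving; since the type-preserving rotations visibly form a subgroup $R$ of $\ZZ_n$, we get $\psi(\G)\subseteq R$. Conversely, given a type-preserving rotation $[r]\in R$, one constructs $x\in\G$ with $\psi(x)=[r]$ by choosing for each $j$ an orientation-preserving homeomorphism $\phi_j\colon I_j\to I_{j+r}$ with $B\circ\phi_j=B$ and setting $x|_{I_j}=\phi_j$. Such a $\phi_j$ exists between intervals of the same type: for type $2$ one matches the two-sided solution sequences of $B=1$ as in Case~1 of the two-singularity theorem; for type $1_{a,L}$ or $1_{b,L}$ the bijection is forced by matching the extreme solution of $B=1$, and it is compatible with $B$ precisely because the boundary limits $L$ at the corresponding endpoints agree (guaranteed as $[r]$ is type-preserving); for type $0$ it exists because the $B$-images coincide, multiplicities included (the remark following Lemma~\ref{lemma:interchangingintervals}). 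The resulting $x$ is continuous---at the endpoint shared by $I_j$ and $I_{j+1}$ both one-sided limits of $x$ equal the endpoint shared by $I_{j+r}$ and $I_{j+1+r}$---and satisfies $B\circ x=B$, so $[r]\in\psi(\G)$. Hence $\psi(\G)=R\cong\ZZ_d$, where $d=|R|$ is the number of cyclic permutations of $I_1,\dots,I_n$ sending each interval to one of the same type, and $d\mid n$.

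It remains to split $1\to K\to\G\to\ZZ_d\to1$. Pick $y_0\in\G$ with $\psi(y_0)$ a generator of $\psi(\G)$. Then $\psi(y_0)$ is a rotation of order $d$, hence acts freely on $\{I_1,\dots,I_n\}$ with all orbits of size $d$; being type-preserving it permutes the $k$ type-$2$ intervals, which therefore split into $k/d$ full orbits (in particular $d\mid k$ when $k>0$, which is why, e.g., $k=3$, $d=2$ cannot occur). Thus conjugation by $y_0$ permutes the shift generators $x_j$ of $K\cong\ZZ^k$ freely, so $K$ is a free $\ZZ[\ZZ_d]$-module and $H^2(\ZZ_d,K)=0$, which already yields a splitting; concretely, $y_0^{\,d}\in K$ acts on each type-$2$ orbit as some power $x_j^{\,m}$ of the shift at a chosen representative of that orbit, and post-composing $y_0$, on the last interval of each such orbit, with a suitable power of the orbit's shift produces $y_*\in\G$ with $\psi(y_*)=\psi(y_0)$ and $y_*^{\,d}=e$ (the correction being invisible on the type-$0$ and type-$1$ intervals, where $K$ is trivial). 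Then $\langle y_*\rangle\cong\ZZ_d$ meets $K$ trivially and, together with $K$, generates $\G$, whence $\G=K\rtimes\langle y_*\rangle\cong\ZZ^k\rtimes\ZZ_d$. The main obstacle is this last step: verifying that one shift-correction per orbit makes $y_*^{\,d}$ trivial on the entire orbit, not merely on its representative, so that $\langle y_*\rangle$ really is a complement of order $d$; the remainder is bookkeeping built on the one-, two- and four-singularity cases already established.
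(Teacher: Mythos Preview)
Your argument follows the same architecture as the paper's: the homomorphism $\psi:\G\to\ZZ_n$ with kernel $K\cong\ZZ^k$ and cyclic image of order $d\mid n$, yielding $\G\cong\ZZ^k\rtimes\ZZ_d$. The paper's own proof is extremely terse---it records $K\cong\ZZ^k$, notes that $\G/K$ is a subgroup $\ZZ_d$ of $\ZZ_n$, and then simply writes down the presentation---whereas you supply two pieces of justification that the paper leaves implicit. First, you verify that $\psi(\G)$ actually equals the subgroup of all type-preserving rotations, by building a preimage of each such rotation interval-by-interval; the paper states this as the description of $d$ in the theorem but does not argue it in the proof. Second, and more substantively, you justify the splitting: the paper takes for granted a lift $y$ with $y^d=e$, while you observe that conjugation by any lift $y_0$ permutes the shift generators $x_j$ in free orbits of length $d$, so $K$ is a free $\ZZ[\ZZ_d]$-module, $H^2(\ZZ_d,K)=0$, and the extension splits. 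This cohomological shortcut is cleaner than the explicit orbit-by-orbit correction you also sketch, and it is a genuine addition over the paper's treatment, which never addresses why the short exact sequence is split rather than a nontrivial extension.
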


\begin{proof}
Say that the number of intervals of type $2$ is $k \in \{0, \ldots, n\}$. 
So $K \cong \mathbb{Z}^k$. Consider the map $\psi: \G \to \mathbb{Z}_n$. 
Then $\psi$ maps $\G$ onto a subgroup $\mathcal{S}$ of $\mathbb{Z}_n$. 
Therefore $\G/K \cong \mathcal{S}$, where $\mathcal{S}$ is a subgroup of $\mathbb{Z}_n$; 
in other words, $\G/K$ is $\mathbb{Z}_d$ where $d | n$. Then, letting $x_1, \ldots, x_k$ denote the shifts of the 
corresponding interval of type $2$ that leave all other intervals fixed, we have
\begin{eqnarray*}
\G =  \langle x_j, y: j = 1, \ldots, k; y^d = e, y x_j = x_{j + n/d}y\,  \forall j,  x_j x_l = x_l x_j \,  \forall j, l\rangle\cong
\ZZ^k \rtimes \ZZ_d.\end{eqnarray*}

\end{proof}

{\bf An application:} Suppose that $B$ is a Blaschke product with finitely many singularities $\lambda_1, \lambda_2, \ldots, \lambda_m$ 
and the zeros of $B$ lie in a nontangential region at the point $\lambda_j$ for each $j$, then the group of invariants  is $\ZZ^m \rtimes \ZZ_m$.

\begin{proof} By Lemma~\ref{lemma:factorization} we know that every interval is a type $2$ interval. 
Therefore, letting $x_1, \ldots, x_m$ be the corresponding shifts of each interval and $y$ the map that 
moves $I_j$ to $I_{j + 1}$ in such a way that $y(z_{n, \lambda,j}) = z_{n, \lambda,{j + 1}}$, then 
$$\G = \langle x_j, y: j = 1, \ldots, m; y^m = e,  y x_j  =  x_{j + 1} y, x_j x_l = x_l x_j \forall j \rangle.$$ 
\end{proof}




  
  


\begin{thebibliography}{99}



  
\bibitem{Berman} R. D. Berman,  
The sets of fixed radial limit value for inner functions.
{\em Illinois J. Math.} 29 (1985) no. 2, 191--219. 



\bibitem{CC}
G. Cassier and I. Chalendar,   The group of the invariants of a finite Blaschke product.  {\em Complex Variables Theory Appl.}  42  (2000)  no. 3, 193--206. 
 
\bibitem{CCC} G. Cassier, I. Chalendar and B. Chevreau,  A mapping
  theorem for the boundary set $X_T$ of an absolutely continuous
  contraction $T$. {\em J. Operator Theory}  50 (2003) 331--343.

\bibitem{Decker} E. Decker,  
On the boundary behavior of singular inner functions.
{\em Michigan Math. J.} 41 (1994) no. 3, 547--562.

\bibitem{Garnett} J. B. Garnett, {\em Bounded analytic functions}. Pure and Applied Mathematics, 96. Academic Press, Inc. 
[Harcourt Brace Jovanovich, Publishers], New York-London, 1981.

\bibitem{hoffman}
K. Hoffman, {\em Banach spaces of analytic functions}. Prentice-Hall Series in Modern Analysis Prentice-Hall, Inc., Englewood Cliffs, N. J., 1962.

\bibitem{LeungLinden} K.-K. Leung and C. N. Linden, Asymptotic values of modulus $1$ of Blaschke products. {\em Trans. Amer. Math. Soc.} 203 (1975) 107--118.

\bibitem{algebra}
S. Mac Lane and G. Birkhoff, {\em Algebra}. Second edition. Macmillan, Inc., New York; Collier-Macmillan Publishers, London,  1979.

  
\bibitem{Rybkin} A. V. Rybkin, 
Convergence of arguments of Blaschke products in $L_p$-metrics.
{\em Proc. Amer. Math. Soc.} 111 (1991) no. 3, 701--708.

\bibitem{Riesz} M. Riesz, Sur certaines in\'egalit\'es dans la th\'eorie des fonctions. {\em Kungl. Fysiogr. S\"allsk. i Lund F\"orh.} 1 (1931), 18--38.

\bibitem{Zygmund} A. Zygmund, Trigonometric series, 2nd ed., vols 1 \& 2, Cambridge, Cambridge Univ. Press, 1959.


\end{thebibliography}
\end{document}